\DeclareMathOperator{\inv}{in}
\newtheorem{Teo}{Theorem}[section]
\newtheorem{Prop}[Teo]{Proposition}
\newtheorem{Lema}[Teo]{Lemma}
\newtheorem{Cor}[Teo]{Corollary}
\theoremstyle{definition}
\newtheorem{Def}[Teo]{Definition}
\newcommand{\N}{\mathbb{N}}
\newcommand{\lra}{\longrightarrow}
\newcommand{\Lra}{\Longrightarrow}
\newcommand{\VR}{\mathcal{O}}
\newcommand{\MI}{\mathfrak{m}}
\begin{document}
\title[Generators]{Generators for extensions of valuation rings}

\author{Josnei Novacoski}
\address{Departamento de Matem\'{a}tica,         Universidade Federal de S\~ao Carlos, Rod. Washington Luís, 235, 13565--905, S\~ao Carlos -SP, Brazil}
\email{josnei@ufscar.br}

\thanks{During the realization of this project the authors were supported by a grant from Funda\c{c}\~ao de Amparo \`a Pesquisa do Estado de S\~ao Paulo (process number 2017/17835-9).}
\keywords{Key polynomials, K\"ahler differentials, the defect}
\subjclass[2010]{Primary 13A18}

\begin{abstract}
For a finite valued field extension $(L/K,v)$ we describe the problem of find sets of generators for the corresponding extension $\VR_L/\VR_K$ of valuation rings. The main tool to obtain such sets are complete sets of (key) polynomials. We show that when the initial index coincide with the ramification index, sequences of key polynomials naturally give rise to sets of generators. We use this to prove Knaf's conjecture for pure extensions. 
\end{abstract}

\maketitle
\section{Introduction}
Let $(L/K,v)$ be a finite extension of valued fields and denote by $\VR_L$ and $\VR_K$ the corresponding valuation rings. The main purpose of this paper is to describe possible sets of generators for $\VR_L$ as an $\VR_K$-algebra.

The motivation for this comes from different areas. The first motivation comes from the local uniformization problem in positive characteristic (see \cite{Nov4} for more details). This problem can be seen as the resolution of a fixed singularity on an algebraic variety along a fixed valuation. Most of the programs to solve this problem rely implicitly on having good sets of generators for extension of valuation rings.  

Another motivation comes from the study of the module of K\"ahler differentials for the extension $\VR_L/\VR_K$. In order to compute such module, it is necessary to present a set of generators for the extension. In \cite{CKR} and \cite{CK}, for each extension of prime degree, the authors use properties of the extension to present such set of generators. In \cite[Proposition 3.5]{NS2023} a similar idea is used. More precisely, for a simple algebraic extension $(L/K,v)$ of valued fields, if the ramification index is one, then for every \emph{sequence of key polynomials} one can construct naturally a set of generators for the extension $\VR_L/\VR_K$. 

The first result of this paper is a generalization of \cite[Proposition 3.5]{NS2023}. Let $\Gamma$ be an ordered abelian group and $\Delta$ a subgroup of $\Gamma$. The \textbf{initial index} of $\Delta$ in $\Gamma$ is defined as
\[
\epsilon(\Gamma|\Delta)=|\{\gamma\in \Gamma\mid 0\leq \gamma< \Delta_{>0}\}|.
\]
Here $\Delta_{>0}$ denotes the set of all positive elements in $\Delta$. For a finite extension of valued fields $(L/K,v)$ we denote $\epsilon(L/K,v)=\epsilon(vF|vK)$. It is easy to show that
\begin{equation}\label{equalieepsilon}
\epsilon(L/K,v)\leq e(L/K,v),
\end{equation}
and they are equal if $e(L/K,v)=1$. The equality in \eqref{equalieepsilon} allows us to prove similar results as when $e(L/K,v)=1$.

Let $q,f\in K[x]$ be two polynomials with $q\notin K$. There exist uniquely determined $f_0,\ldots,f_r\in K[x]$ with
$\deg(f_\ell)<\deg(q)$ for every $\ell$, $0\leq \ell\leq r$, such that
\[
f=f_0+f_1q+\ldots+f_rq^r.
\]
This expression is called the \textbf{$q$-expansion of $f$}. Let $\mu$ be a valuation of $K[x]$. For a monic polynomial $q\in K[x]\setminus K$ the \textbf{truncation of $\mu$ at $q$} is defined as
\[
\mu_q(f)=\min_{0\leq \ell\leq r}\left\{\mu\left(f_\ell q^\ell\right)\right\},
\]
where $f=f_0+f_1q+\ldots+f_nq^n$ is the $q$-expansion of $f$.

A set $\textbf{Q}\subseteq K[x]$ is called a {\bf complete set for $\mu$} if for every $f\in K[x]$ there exists $q\in \textbf{Q}$ such that
\begin{equation}\label{eqabotdegpol}
\deg(q)\leq\deg(f)\mbox{ and }\mu(f)=\mu_q(f).
\end{equation}

For a simple algebraic extension of valued fields $(L/K,v)$ and a generator $\eta$ of $L/K$, the valuation $\nu$ of $K[x]$ \textbf{defined by $v$ and $\eta$} is
\[
\nu f:=v(f(\eta)).
\]
A complete set for $(L/K,v)$ is defined as a set of the form $\{Q_i(\eta)\}_{i\in I}$ such that $\{Q_i\}_{i\in I}$ is a complete set for $\nu$.

For a set $I$ we denote by $\N_0^I$ the set of mappings $\lambda:I\lra \N_0$ such that $\lambda(i)=0$ for all but finitely many $i\in I$ (here $\N_0$ denotes the set of non-negative integers). Let $\textbf{Q}=\{Q_i\}_{i\in I}\subseteq K[x]$ be a set of polynomials index by $I$. For $\lambda\in \N_0^I$ and $\eta\in L$ we denote
\[
\textbf{Q}^\lambda:=\prod_{i\in I}Q_i^{\lambda(i)}\in K[x]\mbox{ and }\textbf{Q}(\eta)^\lambda:=\prod_{i\in I}Q_i(\eta)^{\lambda(i)}\in L.
\]
The following result is a generalization of \cite[Proposition 3.5]{NS2023}.
\begin{Teo}\label{mainthem}
Let $(L/K,v)$ be a simple algebraic extension of valued fields. Assume that $\epsilon(L/K,v)= e(L/K,v)$ and take any complete set $\{Q_i(\eta)\}_{i\in I}$ for $(L/K,v)$. For each $\lambda\in\N_0^I$ there exists $a_\lambda\in K$ such that $\VR_L$ is generated by
\[
\left\{\left.\frac{\textbf Q(\eta)^\lambda}{a_\lambda}\ \right |\ \lambda\in\N_0^\textbf{Q}\right\}
\]
as an $\VR_K$-module. 
\end{Teo}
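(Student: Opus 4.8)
The strategy is to adapt the proof of \cite[Proposition 3.5]{NS2023}, which treats the case $e(L/K,v)=1$, using the hypothesis $\epsilon(L/K,v)=e(L/K,v)$ precisely where that argument uses $vL=vK$. Write $L=K(\eta)$ and let $\nu$ be the valuation of $K[x]$ defined by $v$ and $\eta$, so that $\nu(f)=v(f(\eta))$ and $\nu$ is multiplicative. Put
\[
S:=\{\gamma\in vL\mid 0\le\gamma<(vK)_{>0}\},
\]
so that $|S|=\epsilon(L/K,v)$ by definition. The key auxiliary fact (the ``translation lemma'') is: \emph{for every $\gamma\in vL$ there is $a\in K^{\times}$ with $\gamma-v(a)\in S$.}

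To prove the translation lemma, note first that the map $S\to vL/vK$, $\gamma\mapsto\gamma+vK$, is injective: if $\gamma_1,\gamma_2\in S$ and $\gamma_1-\gamma_2\in vK$, then $\gamma_1-\gamma_2$ is, in absolute value, smaller than every element of $(vK)_{>0}$, which forces $\gamma_1-\gamma_2=0$. This already re-proves $\epsilon(L/K,v)\le e(L/K,v)$, cf.\ \eqref{equalieepsilon}, since $e(L/K,v)=|vL/vK|$. Under the hypothesis $\epsilon(L/K,v)=e(L/K,v)$ this injection is then a bijection, so every coset of $vK$ in $vL$ meets $S$; applying this to the coset of a given $\gamma$ yields the claim.

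Now, for each $\lambda\in\N_0^{\textbf{Q}}$, apply the translation lemma to $\gamma=v(\textbf{Q}(\eta)^{\lambda})$ to obtain $a_{\lambda}\in K^{\times}$ such that $\theta_{\lambda}:=\textbf{Q}(\eta)^{\lambda}/a_{\lambda}$ satisfies $v(\theta_{\lambda})\in S$; in particular $v(\theta_{\lambda})\ge0$, so $\theta_{\lambda}\in\VR_L$. Separately I would prove, by induction on $\deg f$, that every $f\in K[x]$ admits a \textbf{Q}-expansion $f=\sum_{\lambda}c_{\lambda}\textbf{Q}^{\lambda}$ (a finite sum, $c_{\lambda}\in K$) with $\nu(c_{\lambda}\textbf{Q}^{\lambda})\ge\nu(f)$ for all $\lambda$. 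For $\deg f\le0$ this is clear; for $\deg f\ge1$, completeness provides $q\in\textbf{Q}$ with $\deg q\le\deg f$ and $\nu(f)=\nu_q(f)$, and the $q$-expansion $f=\sum_{\ell}f_{\ell}q^{\ell}$ has $\deg f_{\ell}<\deg q\le\deg f$ and $\nu(f_{\ell}q^{\ell})\ge\nu(f)$; expanding each $f_{\ell}$ by the induction hypothesis, multiplying by $q^{\ell}$ (which, as $q\in\textbf{Q}$, again yields scalar multiples of monomials $\textbf{Q}^{\mu}$), and collecting like terms preserves the bound $\nu(\,\cdot\,)\ge\nu(f)$, by multiplicativity of $\nu$ and the ultrametric inequality.

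Finally, given $z\in\VR_L$, write $z=f(\eta)$ with $f\in K[x]$; the \textbf{Q}-expansion of $f$ gives $z=\sum_{\lambda}c_{\lambda}\textbf{Q}(\eta)^{\lambda}=\sum_{\lambda}b_{\lambda}\theta_{\lambda}$ with $b_{\lambda}:=c_{\lambda}a_{\lambda}\in K$. For each $\lambda$,
\[
v(b_{\lambda})=v\!\left(c_{\lambda}\textbf{Q}(\eta)^{\lambda}\right)-v(\theta_{\lambda})\ge v(z)-v(\theta_{\lambda})\ge-v(\theta_{\lambda}),
\]
and since $v(\theta_{\lambda})\in S$ is smaller than every element of $(vK)_{>0}$ while $v(b_{\lambda})\in vK$, this forces $v(b_{\lambda})\ge0$, i.e.\ $b_{\lambda}\in\VR_K$. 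Hence $\VR_L=\sum_{\lambda}\VR_K\,\theta_{\lambda}$, as asserted. The hard part will be the third step: ensuring the iterated truncation producing the \textbf{Q}-expansion terminates (the degrees of the successive remainders strictly decrease) and that the passage from the identity $\nu(f)=\nu_q(f)$ to a genuine monomial expansion does not lose the valuation bound after recombination of terms; everything else is formal manipulation inside the value group.
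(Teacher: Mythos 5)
Your proof is correct and follows essentially the same route as the paper: your ``translation lemma'' is exactly Proposition \ref{Propcharepsioln} (Proposition 3.4 of \cite{Nov12}), which you reprove directly; your $\textbf{Q}$-expansion step is the paper's Lemma \ref{generatimpor}; and your final coset argument forcing $v(b_\lambda)\geq 0$ is the paper's Lemma \ref{obssobregened} combined as in Theorem \ref{Theoremkeypol2}. The step you flag as ``hard'' is handled in the paper by the same induction on degree, and it goes through exactly as you describe.
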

When $e(L/K,v)=1$ the situation is much simpler. In that case, \cite[Proposition 3.5]{NS2023} shows that for each $i\in I$ there exists $a_i$ such that
\[
\VR_L=\VR_K\left[\left.\frac{Q_i(\eta)}{a_i}\ \right |\ i\in I\right].
\]
In that case, for each $\lambda\in\N_0^\textbf{Q}$ the corresponding element $a_\lambda$, as in Theorem \ref{mainthem} can be chosen to be $\textbf{a}^\lambda$ (where $\textbf{a}:=\{a_i\}_{i\in I}$).

We also present an application of Theorem \ref{mainthem}. Namely, we use it to prove Knaf's conjecture for pure extensions. Let $d=d(L/K,v)$ be the defect of $(L/K,v)$. For any subring $A$ of $\VR_L$ we will denote by $A_v$ the localization of $A$ at $A\cap \MI_L$ (here $\MI_L$ denotes the maximal ideal of $\VR_L$). We say that $\VR_L$ is essentially finitely generated over $\VR_K$ if there exists $b_1,\ldots, b_r\in \VR_L$ such that
\[
\VR_L=\VR_K[b_1,\ldots,b_r]_v.
\]
Knaf proved that if $\VR_L/\VR_K$ is essentially finitely generated, then
\begin{equation}\label{knafconsudtion}
e(L/K,v)=\epsilon(L/K,v)\mbox{ and }d(L/K,v)=1.
\end{equation}
He conjectured that the converse is also true, i.e., that if \eqref{knafconsudtion} is satisfied, then $\VR_L$ is essentially finitely generated over $\VR_K$.

This conjecture was explored in \cite{Nov12}. There, it was proved that if $K$ is the quotient field of an excellent two-dimensional local domain and \eqref{knafconsudtion} is satisfied, then $\VR_L$ is essentially finitely generated. Also in \cite{Nov12}, it was proved that Knaf's conjecture is true if $v$ is an \textit{Abhyankar valuation} and $Kv$ is a separable extension of $K$. In \cite{CutKnaf}, Cutkosky generalized this latter result, without the assumption that $Kv$ is a separable extension of $K$. In \cite{Datta2021}, Datta presented a proof of Knaf's conjecture in full generality.

In this paper we present an alternative proof of Knaf's conjecture for pure extensions. Let $(L/K,v)$ be a simple algebraic extension of valued fields of degree $n$. Fix a generator $\eta$ of $L/K$ and consider the valuation $\nu$ on $K[x]$ defined by $v$ and $\eta$. For each $m$, $1\leq m\leq n$, we define
\[
\Psi_m:=\{Q\in K[x]\mid \deg(Q)=m\mbox{ and }Q\mbox{ is an key polynomial for }\nu\}
\]
(for the definition of key polynomials, see Section \ref{keypolydefectformula}). 
\begin{Def}
We say that $(L/K,v)$ is pure in $\eta$ if $\Psi_m=\emptyset$ for every $m$, $1<m<n$. We will simply say that $(L/K,v)$ is pure if it is pure in $\eta$ for some generator $\eta$ of $L/K$.
\end{Def}

\begin{Prop}\label{Propimportan}
Assume that $(L/K,v)$ is pure and $e:=e(L/K,v)=\epsilon(L/K,v)$. If $d(L/K,v)=1$, then there exist a generator $\eta$ of $L/K$ and $a_1,\ldots,a_e\in K$ such that
\[
\VR_L=\VR_K\left[\left.\frac{\eta^\ell}{a_\ell}\ \right|\  1\leq \ell\leq e\right]_v.
\]
\end{Prop}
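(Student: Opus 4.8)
The plan is to combine Theorem~\ref{mainthem} with the hypothesis that $(L/K,v)$ is pure in order to pin down a very small complete set, and then use the defect-one hypothesis to upgrade the module generation of Theorem~\ref{mainthem} to the essentially-finite algebra generation claimed here. First I would fix a generator $\eta$ in which $(L/K,v)$ is pure, so that $\Psi_m=\emptyset$ for $1<m<n$. Let $\nu$ be the valuation of $K[x]$ defined by $v$ and $\eta$. Because there are no key polynomials of intermediate degree, a complete sequence of key polynomials for $\nu$ must consist of polynomials of degree $1$ (which we may take to be $x$, i.e.\ $\eta$ itself, together with possibly other linear polynomials) followed — if $\nu$ is not already equal to its truncation at a linear polynomial — by a single key polynomial of degree $n$, namely the minimal polynomial of $\eta$ over $K$ (up to the usual normalizations). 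The key point is that $Q_n(\eta)=0$ in $L$, so the only elements of the resulting complete set $\{Q_i(\eta)\}$ that matter for generating $\VR_L$ are powers of linear polynomials evaluated at $\eta$, i.e.\ elements of the form $(\eta-c)^\ell$ with $c\in K$; after a translation we may assume these are just the powers $\eta^\ell$.

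Next I would invoke Theorem~\ref{mainthem}, using $\epsilon(L/K,v)=e(L/K,v)$: it produces, for each $\lambda$, an element $a_\lambda\in K$ such that $\VR_L$ is generated as an $\VR_K$-module by the elements $\mathbf{Q}(\eta)^\lambda/a_\lambda$. Restricting to the nonzero such elements and using the previous paragraph, these are all of the form $\eta^\ell/a_\ell$, and — crucially — the degree bound in the definition of a complete set (\eqref{eqabotdegpol}) forces only exponents $\ell$ with $\ell<n$ to appear, while the definition of $e=\epsilon$ together with purity lets one cut the range down further to $1\le \ell\le e$: higher powers $\eta^\ell$ with $\ell>e$ have their "value contribution modulo $vK$" already realized, so after dividing by a suitable $a_\ell\in K$ they lie in the $\VR_K$-subalgebra generated by $\eta/a_1,\dots,\eta^e/a_e$. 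This is the step where I expect the main work: showing that the finitely many residues $v(\eta^\ell)+vK$ for $1\le\ell\le e$ already exhaust $vL/vK$ (which is where $e=\epsilon$ is used essentially) and that, correspondingly, every module generator from Theorem~\ref{mainthem} is, up to a unit of $\VR_K$, a monomial in the $\eta^\ell/a_\ell$ with $1\le\ell\le e$.

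Finally I would bring in the defect hypothesis $d(L/K,v)=1$. The outcome of the previous steps is a subring $A=\VR_K[\eta/a_1,\dots,\eta^e/a_e]\subseteq\VR_L$ with the property that $\VR_L$ is generated over $A$ by elements that become units (or whose relevant residues are already accounted for) — in other words, $A_v$ and $\VR_L$ have the same value group and the same residue field, and $A_v\subseteq\VR_L$ is an extension of valuation rings (after localizing $A$ at $A\cap\MI_L$) with trivial defect. An immisible extension of valuation rings with the same value group, the same residue field, and defect one is an equality; hence $\VR_L=A_v=\VR_K[\eta/a_1,\dots,\eta^e/a_e]_v$, which is exactly the assertion. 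The delicate point in this last step is checking that $A\cap\MI_L$ is genuinely the relevant prime (so that $A_v$ sits inside $\VR_L$ as a valuation ring with the right value group), and that purity guarantees there is no "hidden" defect contribution from an intermediate-degree key polynomial — which is precisely what the hypothesis $\Psi_m=\emptyset$ for $1<m<n$ rules out.
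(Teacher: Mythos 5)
There is a genuine gap, and it sits exactly where the paper's real work is. Your first paragraph claims that, by purity, the relevant complete set reduces (after a single translation) to the powers $\eta^\ell$. This fails in the essential case, namely when $\Psi_1$ is a plateau: then $v(\eta-K)$ has no maximum, and by Lemma \ref{corsequenclpo} a complete sequence of key polynomials must contain infinitely many distinct linear polynomials $x-c_i$ with $\{v(\eta-c_i)\}$ cofinal in $v(\eta-K)$. No single translation turns all of these into powers of one element, so the module generators supplied by Theorem \ref{mainthem} are monomials $\prod_i(\eta-c_i)^{\lambda(i)}/a_\lambda$ in infinitely many different translates, not expressions in $\eta$ alone. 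The whole difficulty of the proposition is to show that each such $(\eta-c)$ with $v(\eta-c)$ large is, up to a constant of $K$ and a unit of the localized subalgebra, already captured by $\eta/a_1,\dots,\eta^e/a_e$. The paper does this by using $d(L/K,v)=1$ through the defect formula (Theorem \ref{defectformu} and Proposition \ref{propdefectcasepure}) to get the inequality $\beta_1+v(\eta-c)<\beta_\ell+\ell\,v(\eta-c)$ for $\ell>1$, which via the $(x-c)$-expansion of the minimal polynomial yields the identity
\[
\eta-c=-\frac{b_0}{1+b_2(\eta-c)+\cdots+b_n(\eta-c)^{n-1}},
\]
with the denominator $h$ satisfying $vh=0$ and $h\in\VR_K\left[\eta^\ell/a_\ell\ \middle|\ 1\leq\ell\leq e\right]$. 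Your proposal never produces anything playing the role of this identity, so the reduction from the full complete set to the finite list $\eta^\ell/a_\ell$, $1\leq\ell\leq e$, is unproved.

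Your third paragraph cannot repair this: the assertion that an extension of valuation rings with equal value groups, equal residue fields and defect one must be an equality is not a usable principle here, because it is precisely the unverified point that $A_v$ (with $A=\VR_K[\eta/a_1,\dots,\eta^e/a_e]$) is a valuation ring of $L$ with the same value group and residue field as $\VR_L$ — establishing that is equivalent to the hard step above, and the hypothesis $d(L/K,v)=1$ enters the paper not as an abstract comparison of invariants but concretely, via the defect formula, to rule out a defect contribution from the limit key polynomial of the plateau $\Psi_1$ (i.e. to force $d(\Psi_1)=1$, which is what makes $h$ a unit). Also note a smaller inaccuracy: the degree bound \eqref{eqabotdegpol} constrains which $q$ may be used to truncate a given $f$; it does not bound the exponents appearing in the monomials $\textbf{Q}^\lambda$ of Theorem \ref{mainthem}, which have arbitrarily large total degree.
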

The proof of Proposition \ref{Propimportan} uses the theory of key polynomials and the defect formula (see Section \ref{keypolydefectformula}).
\section{Notation}
In this paper we will use the letter $v$ to denote a valuation of $K$ or its extensions to algebraic extensions of $K$. The letter $\nu$ will be used for the valuation of $K[x]$ defined by $v$ and $\eta$ where $(L/K,v)$ is a simple algebraic extension and $\eta$ a generator of $L/K$. For a general valuation of $K[x]$, we will use the letter $\mu$.

For a valued field $(K,v)$ we will denote by $\VR_K$ the valuation ring, by $vK$ the value group and by $Kv$ the residue field of $v$. Also, for $b\in K$ we denote by $vb$ or $v(b)$ the value of $b$ in $vK$. If $b\in \VR_K$, then we denote by $bv$ the residue of $b$ in $Kv$.

For a finite valued field extension $(L/K,v)$ we will denote by $e(L/K,v)$ and $f(L/K,v)$ the ramification and inertial indices of
$(L/K,v)$, respectively:
\[
e(L/K,v)=(vL:vK)\mbox{ and }f(L/K,v)=[Lv:Kv].
\]
We will denote by $d(L/K,v)$ the defect of $(L/K,v)$. This can be defined as follows. Let $L^h$ and $K^h$ be the \textit{henselizations} of $L$ and $K$, respectively, determined by a fixed extension of $v$ to $\overline{K}$. Then
\[
d(L/K,v):=\frac{[L^h:K^h]}{(vL:vK)\cdot [Lv:Kv]}.
\]

\section{The generation of an extension of valuation rings}
We start this section by describing the known results about the generation of extensions of valuation rings. For a finite valued field extension $(L/K,v)$, we are interested on whether $\VR_L$ is finitely, or essentially finitely, generated over $\VR_K$. This problem was first studied in \cite{Nov1}. There, it was shown the following.

\begin{Teo}\cite[Theorem 1.3]{Nov1}
Assume that $L$ lies in the absolute inertial field of $K$. Then $\VR_L=\VR_K[\eta]_v$ for any given \emph{henselian generator} $\eta$ of $L/K$.
\end{Teo}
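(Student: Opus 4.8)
The plan is to reduce the statement to the henselization of $(K,v)$, where the hypothesis that $L$ lies in the absolute inertial field becomes the clean assertion that $L^h/K^h$ is unramified, and then to descend the resulting equality back to $K$. First I would record the consequences of the hypothesis: since $L$ lies in the absolute inertial field of $K$ we have $e(L/K,v)=1$ and $d(L/K,v)=1$, the residue extension $Lv/Kv$ is separable, and $[L:K]=[Lv:Kv]=:n$. By definition of a henselian generator, $\eta\in\VR_L$, its residue $\eta v$ generates $Lv/Kv$, and the minimal polynomial $f\in K[x]$ of $\eta$ over $K$ is monic with coefficients in $\VR_K$ whose reduction $\overline f\in Kv[x]$ is the (separable, irreducible) minimal polynomial of $\eta v$ over $Kv$; in particular $\deg f=n$.

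Next I would work over $K^h$. Since $\overline f$ is irreducible and separable and $K^h$ is henselian, $f$ remains irreducible over $K^h$, so $L^h=K^h(\eta)$ with $[L^h:K^h]=n$ and $\{1,\eta,\ldots,\eta^{n-1}\}$ is a $K^h$-basis of $L^h$. The heart of the argument is to show
\[
\VR_{L^h}=\VR_{K^h}[\eta]=\bigoplus_{i=0}^{n-1}\VR_{K^h}\,\eta^i .
\]
Fix the unique extension of $v$ to $\overline K$ and let $\eta=\eta_1,\ldots,\eta_n$ be the roots of $f$. Each $\eta_j$ is integral over $\VR_{K^h}$, hence lies in $\VR_{\overline K}$, and by Hensel's lemma the residues $\eta_1 v,\ldots,\eta_n v$ are exactly the $n$ \emph{distinct} roots of $\overline f$. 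Consequently each difference $\eta_k-\eta_j$ (for $j\neq k$) has residue $\eta_k v-\eta_j v\neq 0$ and is therefore a unit of $\VR_{\overline K}$, so the Vandermonde determinant $\det(\eta_j^{\,i})=\prod_{j<k}(\eta_k-\eta_j)$ is a unit as well. Now take $z\in\VR_{L^h}$ and write $z=\sum_{i=0}^{n-1}a_i\eta^i$ with $a_i\in K^h$. Applying the $K^h$-embeddings $\sigma_j$ with $\sigma_j(\eta)=\eta_j$ turns this into a Vandermonde linear system for the $a_i$ whose right-hand side consists of the conjugates $\sigma_j(z)\in\VR_{\overline K}$; since the determinant is a unit, Cramer's rule yields $a_i\in\VR_{\overline K}\cap K^h=\VR_{K^h}$. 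This proves the displayed equality.

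Finally I would descend. Given $z\in\VR_L\subseteq\VR_{L^h}=\bigoplus_i\VR_{K^h}\eta^i$, the unique coordinates of $z$ in the $K$-basis $\{1,\eta,\ldots,\eta^{n-1}\}$ of $L$ coincide with its coordinates over $K^h$ (uniqueness of coordinates in the free module $\bigoplus_i K^h\eta^i\supseteq\bigoplus_i K\eta^i$), so they lie in $K\cap\VR_{K^h}=\VR_K$. Hence $z\in\VR_K[\eta]$, giving $\VR_L=\VR_K[\eta]$ and in particular $\VR_L=\VR_K[\eta]_v$. The main obstacle is the middle step over the henselization: it is precisely the separability of $Lv/Kv$ that forces the Vandermonde determinant (equivalently the discriminant of $f$) to be a unit, and passing to $K^h$ is what guarantees a unique extension of $v$ together with the distinctness of the residues $\eta_j v$ supplied by Hensel's lemma. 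Without these two inputs the Cramer computation could leave the coordinates $a_i$ outside $\VR_K$.
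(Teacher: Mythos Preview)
The paper does not give its own proof of this theorem; it is quoted from \cite{Nov1} as a background result, so there is no in-paper argument to compare against. Your proof is correct and follows the classical route: pass to the henselization so that $v$ extends uniquely to $\overline K$, use separability of $\overline f$ to force the Vandermonde determinant $\prod_{j<k}(\eta_k-\eta_j)$ to be a unit, solve for the coefficients by Cramer's rule inside $\VR_{\overline K}\cap K^h=\VR_{K^h}$, and then descend via $K\cap\VR_{K^h}=\VR_K$. In fact you obtain the slightly stronger conclusion $\VR_L=\VR_K[\eta]$, from which $\VR_L=\VR_K[\eta]_v$ is immediate. The only point where one might want an extra word is the claim that the residues $\eta_1v,\ldots,\eta_nv$ are pairwise distinct: this is exactly the uniqueness part of Hensel's lemma (two roots of $f$ with the same residue would contradict the unique lift of a simple root of $\overline f$), and you do invoke it, so there is no gap.
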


The above theorem says, in particular, that $\VR_L$ is essentially finitely generated over $\VR_K$. Also in \cite{Nov1} it was shown that even if $L$ lies in the absolute inertial field of $K$, it is not necessarily true that $\VR_L$ is finitely generated as an $\VR_K$-algebra. In the same work, it was presented conditions for this to be satisfied.

One natural reason to find generators of the extension of valuation rings is to compute the module of K\"ahler differentials of $\VR_L/\VR_K$. For this purpose, in \cite{CKR} and \cite{CK} the authors explicitly presented sets of generators for the cases they treated. More precisely, they assume that $L/K$ is Galois and of prime degree and used these properties to present sets of generators of $\VR_L$ over $\VR_K$. They used these sets of generators to compute the module of K\"ahler differentials of $\VR_L/\VR_K$.

The sets of generators  obtained in \cite{CKR} and \cite{CK} (in most of the cases) can be deduced from complete sets of generators for $(L/K,v)$. Indeed, for any extension of valued fields with ramification index equals to one, any complete set of generators give rise to generators of $\VR_L/\VR_K$. More precisely, the following is true.

\begin{Prop}\cite[Proposition 3.5]{NS2023}\label{Corfinitelgenalg}
Let $(L/K,v)$ be a simple algebraic extension of valued fields and assume that $e(L/K,v)=1$. Take a generator $\eta$ of $L/K$ and consider the valuation $\nu$ of $K[x]$ defined by $v$ and $\eta$. For any complete set $\{Q_i\}_{i\in I}$ for $\nu$ and every $i\in I$ choose $a_i\in I$ such that $v(a_i)=\nu(Q_i)$. Then
\begin{equation}\label{formulagebfvalr}
\VR_L=\VR_K\left[\left.\frac{Q_i(\eta)}{a_i}\ \right|\ i\in I\right].
\end{equation}
\end{Prop}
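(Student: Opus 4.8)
The plan is to prove the nontrivial inclusion $\VR_L \subseteq R$, where $R := \VR_K\left[Q_i(\eta)/a_i \mid i\in I\right]$; the reverse inclusion is immediate, since $v\bigl(Q_i(\eta)/a_i\bigr)=\nu(Q_i)-v(a_i)=0$ puts each generator in $\VR_L$ (indeed, makes it a unit). Before starting I would record the single role of the hypothesis $e(L/K,v)=1$: it is equivalent to $vL=vK$, and because $\nu(Q_i)=v(Q_i(\eta))\in vL=vK$, this is exactly what guarantees that the $a_i\in K$ with $v(a_i)=\nu(Q_i)$ exist. Apart from this, the ramification hypothesis will not be used again.

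For $z\in\VR_L$ set $d(z):=\min\{\deg g\mid g\in K[x],\ g(\eta)=z\}$ and argue by strong induction on $d(z)$. The base case $d(z)=0$ is $z\in K\cap\VR_L=\VR_K\subseteq R$. For the inductive step, fix $f$ of minimal degree with $f(\eta)=z$, so $\nu(f)=v(z)\ge 0$. Completeness of $\{Q_i\}$ supplies an index $i$ with $\deg Q_i\le\deg f$ and $\nu(f)=\nu_{Q_i}(f)$; writing the $Q_i$-expansion $f=f_0+f_1Q_i+\dots+f_rQ_i^r$ with $\deg f_\ell<\deg Q_i$, I would rearrange
\[
z=f(\eta)=\sum_{\ell=0}^r f_\ell(\eta)\,Q_i(\eta)^\ell=\sum_{\ell=0}^r\bigl(a_i^\ell f_\ell(\eta)\bigr)\left(\frac{Q_i(\eta)}{a_i}\right)^\ell.
\]

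The crux is to show that each coefficient $w_\ell:=a_i^\ell f_\ell(\eta)$ lies in $R$, so that the displayed identity exhibits $z$ as an element of $R$. For this I would use $\nu(f)=\nu_{Q_i}(f)=\min_\ell\{\nu(f_\ell)+\ell\,\nu(Q_i)\}$, which forces $\nu(f_\ell)+\ell\,\nu(Q_i)\ge\nu(f)\ge 0$ for every $\ell$ occurring. Since $v(a_i)=\nu(Q_i)$, this reads precisely $v(w_\ell)=\nu(f_\ell)+\ell\,v(a_i)\ge 0$, so $w_\ell\in\VR_L$; and $w_\ell=h_\ell(\eta)$ with $h_\ell:=a_i^\ell f_\ell$ of degree $\deg f_\ell<\deg Q_i\le\deg f=d(z)$, hence $d(w_\ell)<d(z)$ and the induction hypothesis yields $w_\ell\in R$. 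As the powers $(Q_i(\eta)/a_i)^\ell$ already lie in $R$, we conclude $z\in R$.

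The step I expect to need the most care is the bookkeeping that keeps the induction well-founded: one must confirm the reduction genuinely lowers degree, i.e.\ that $Q_i$ actually occurs in the expansion ($r\ge 1$) so that $\deg f_\ell<\deg Q_i\le\deg f$ is a \emph{strict} drop. This holds because $\deg Q_i\le\deg f$ forces $r\ge 1$ (otherwise $f=f_0$ would have degree below $\deg Q_i$), and it is exactly where completeness, through the inequality $\deg Q_i\le\deg f$ of \eqref{eqabotdegpol}, does the essential work. The two valuation-theoretic inputs---existence of the $a_i$ from $e=1$, and the value estimate on the $w_\ell$ from the truncation---are otherwise routine.
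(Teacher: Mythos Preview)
Your argument is correct. The paper does not prove this proposition directly (it is quoted from \cite{NS2023}), but it proves the generalization Theorem~\ref{mainthem} by the same mechanism you use: an induction on degree that unfolds $f$ via a $Q_i$-expansion (Lemma~\ref{generatimpor}), followed by the observation that the resulting coefficients land in $\VR_K$ (Lemma~\ref{obssobregened} and Theorem~\ref{Theoremkeypol2}). Your proof is the $e=1$ specialization of that scheme, carried out ``in place'' on elements of $\VR_L$ rather than first packaging the decomposition as a statement about $K[x]$; the only substantive difference is organizational, and your remark that $e=1$ enters solely to guarantee the existence of the $a_i$ matches the role of Proposition~\ref{Propcharepsioln} in the general case.
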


When $e(L/K,v)\neq 1$, the situation is more complicated. The reason for that is that complete sets do not give rise (as before) to sets of generators. In \cite{CKR} and \cite{CK} the case of extensions of prime degree in which $e(L/K,v)\neq 1$ was also studied. This process was generalized in \cite{NS2023}.

\begin{Lema}\cite[Lemma 6.10]{NS2023}
If $(L/K,v)$ is pure in $\eta$ and $e(L/K,v)=[L:K]$, then
\begin{equation}
\VR_L=\VR_K\left[\left.\frac{\eta}a\ \right|\ a\in K\mbox{ and }va<\gamma\right].\label{eq:VRcase1}
\end{equation}
\end{Lema}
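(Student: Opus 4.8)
\medskip
\noindent\textit{Proof sketch.} The plan is to reduce the claim to a description of $\VR_L$ coming from a single monomial valuation and then to build the required elements $\eta/a$ by hand; here $\gamma$ is the value $\nu(x)=v(\eta)$ of the first key polynomial in the relevant sequence, and I write $n:=[L:K]$.

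First I would use the two hypotheses to trivialise the shape of $\nu$. The hypothesis gives $(vL:vK)=e(L/K,v)=n$; combined with $d(L/K,v)=[L^h:K^h]/\bigl((vL:vK)[Lv:Kv]\bigr)$, the estimate $[L^h:K^h]\le[L:K]=n$, and $d(L/K,v)\ge1$, this forces $[Lv:Kv]=1$ and $d(L/K,v)=1$. Purity says that every key polynomial for $\nu$ has degree $1$ or $n$, and the vanishing of the defect rules out limit key polynomials, so $\nu$ is the top of a MacLane--Vaqui\'e chain $\mu_0<\mu_1<\nu$ whose first key polynomial has degree $1$ and whose top key polynomial is the minimal polynomial of $\eta$, of degree $n$. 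After replacing $\eta$ by a suitable $b(\eta-c)$ with $b,c\in K$ (translate to an optimal center, then rescale) --- harmless, since this preserves purity in $\eta$ as well as the extension $\VR_L/\VR_K$ --- I may assume $v(\eta)=\gamma>0$ and that $x$ itself is the first key polynomial; then $\mu_1$ is the monomial valuation with $\mu_1(x)=\gamma$, and, there being no key polynomial of degree strictly between $1$ and $n$, $\nu$ agrees with $\mu_1$ on all polynomials of degree $<n$:
\[
\nu\Bigl(\sum_{\ell<n}a_\ell x^\ell\Bigr)=\min_{\ell<n}\bigl(v(a_\ell)+\ell\gamma\bigr),\qquad a_\ell\in K .
\]
Since $L=K[\eta]$ this forces $vL=vK+\Z\gamma$, and hence $\gamma$ has order exactly $n$ in $vL/vK$.

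Next I read off $\VR_L$. By the monomial formula, every $z\in L$, written uniquely as $z=\sum_{\ell=0}^{n-1}c_\ell\eta^\ell$ with $c_\ell\in K$, has $v(z)=\min_{0\le\ell\le n-1}\bigl(v(c_\ell)+\ell\gamma\bigr)$, so that
\[
z\in\VR_L\iff v(c_\ell\eta^\ell)\ge 0\ \text{ for every }\ell .
\]
Let $B$ denote the $\VR_K$-algebra on the right of \eqref{eq:VRcase1}. Each generator $\eta/a$ has value $\gamma-va>0$, so $B\subseteq\VR_L$; the content is the reverse inclusion. By the characterisation just obtained it suffices to show $c_\ell\eta^\ell\in B$ whenever $v(c_\ell)+\ell\gamma\ge 0$. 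For $\ell=0$ this is clear. For $1\le\ell\le n-1$ the inequality is strict (equality would put $\ell\gamma$ in $vK$, impossible since $\gamma$ has order $n$), so $-v(c_\ell)<\ell\gamma$, and it is enough to produce $a_1,\dots,a_\ell\in K$ with $va_j<\gamma$ for every $j$ and $\sum_j va_j\ge -v(c_\ell)$: then
\[
c_\ell\eta^\ell=(c_\ell a_1\cdots a_\ell)\prod_{j=1}^\ell\frac{\eta}{a_j}
\]
exhibits $c_\ell\eta^\ell$ as a product of generators of $B$ times the factor $c_\ell a_1\cdots a_\ell$, which has value $\ge0$ and so lies in $\VR_K$.

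Writing $\beta:=-v(c_\ell)\in vK$, everything therefore comes down to the following purely group-theoretic assertion about $vK\subseteq vL$: \emph{for each $\beta\in vK$ with $\beta<\ell\gamma$ there exist $\alpha_1,\dots,\alpha_\ell\in vK$, each $<\gamma$, with $\alpha_1+\dots+\alpha_\ell\ge\beta$.} This is the crux, and the step I expect to be the main obstacle. When $\beta\le0$ one takes all $\alpha_j=0$. When $\beta>0$ one induces on $\ell$: from $\beta<\ell\gamma$ one has $\beta-(\ell-1)\gamma<\gamma$, one picks $\alpha\in vK$ in the nonempty half-open range $\bigl(\max(0,\beta-(\ell-1)\gamma),\,\gamma\bigr)$, notes that $0<\beta-\alpha<(\ell-1)\gamma$, and applies the inductive hypothesis to $\beta-\alpha$. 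The one genuinely delicate point --- and the only place where the full strength of $e(L/K,v)=[L:K]$, together with the normalisation of $\gamma$ inherited from the chosen sequence of key polynomials, is used --- is that $vK$ really does meet this prescribed interval just below $\gamma$; this is also exactly what forces the correct generators to be the quotients $\eta/a$ with $va<\gamma$ rather than $\eta$ itself. With that settled, the rest is routine bookkeeping with the monomial valuation.
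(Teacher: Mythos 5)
The paper offers no proof of this lemma at all: it is quoted from \cite[Lemma 6.10]{NS2023}, and in particular $\gamma$ is never defined in the present paper, so your proposal has to stand on its own with your reading $\gamma=v(\eta)=\nu(x)$ (after your translation and rescaling). Under that reading the step you yourself flag as the crux is not merely delicate --- it is false, and so is the statement you would be proving. Take $K=k((t))$ with the $t$-adic valuation ($vK=\Z$, $k$ algebraically closed of characteristic $0$) and $\eta=t^{5/3}$, so that $L=K(\eta)=K(t^{1/3})$, $[L:K]=e(L/K,v)=3$, and the extension is pure in $\eta$: a quadratic $a\in\overline K$ lies in $k((t^{1/2}))$, whence $\overline v(\eta-a)\le 5/3=\sup v(\eta-K)$ and $\Psi_2=\emptyset$. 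Here $\gamma=v(\eta)=5/3>0$ is the maximum of $v(\eta-K)$ and has order $3$ modulo $vK$, i.e.\ exactly your normalized situation. Your group-theoretic claim fails for $\ell=2$, $\beta=3$: every $\alpha\in vK=\Z$ with $\alpha<5/3$ satisfies $\alpha\le 1$, so $\alpha_1+\alpha_2\le 2<3$ even though $3<2\gamma=10/3$. Correspondingly $t^{1/3}=\eta^2/t^{3}\in\VR_L$, while $\VR_K\left[\eta/a\mid va<\gamma\right]=k[[t]][t^{2/3}]=k[[t]]\oplus k[[t]]t^{2/3}\oplus k[[t]]t^{4/3}$ does not contain $t^{1/3}$ (and this ring is local, so localizing at the center does not help). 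Moreover your replacement $\eta\mapsto b(\eta-c)$ only moves $v(\eta)$ inside its coset modulo $vK$, so it cannot repair this, and it also changes the right-hand side of the asserted equality, which is written in terms of the original $\eta$. The upshot is that the interval-just-below-$\gamma$ property cannot be deduced from ``pure in $\eta$ and $e(L/K,v)=[L:K]$'' together with your normalization; it is precisely the extra content of whatever definition or normalization of $\gamma$ (and of the generator) is in force in \cite[Lemma 6.10]{NS2023}, and without importing that, your argument cannot be completed.

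Two smaller points. The assertion that ``vanishing of the defect rules out limit key polynomials'' is false in general: for $(\Q(i)/\Q,v)$ with $v$ one of the $5$-adic extensions, the defect is $1$ yet $\Psi_1$ is a plateau with limit key polynomial $x^2+1$ of relative degree $1$. What is true in your setting is that $\nu(\Psi_1)$ must have a maximum, but for a different reason: if $v(\eta-K)$ had no maximum, then every value $v(\eta-c)$ would equal $v(c'-c)\in vK$ for a better approximation $c'$, and purity (degree-one truncations computing $\nu$ on polynomials of degree $<n$) would force $vL=vK$, contradicting $e(L/K,v)=[L:K]>1$. Granting that, your monomial description of $\nu$ in degrees $<n$ and the reduction of the lemma to the displayed statement about $vK$ and $\gamma$ are correct; the gap is concentrated entirely in that statement.
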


\subsection{About Theorem \ref{mainthem}}
We present now some general results that will be crucial in the proof of Theorem \ref{mainthem}. Let $\mu$ be any valuation of $K[x]$ and $v=\mu|_K$.

\begin{Lema}\label{generatimpor}
Assume that $\textbf Q=\{Q_i\}_{i\in I}$ is a complete set for $\mu$. Then for every $f\in K[x]$ there exist $a_1,\ldots, a_r\in K$ and $\lambda_1,\ldots,\lambda_r\in \N_0^{\textbf{Q}}$ such that
\begin{equation}\label{Equanposrnalo}
f=\sum_{\ell=1}^r a_\ell \textbf{Q}^{\lambda_\ell}\mbox{ and }\mu(f)=\min_{1\leq \ell\leq r}\left\{\mu\left(a_\ell \textbf{Q}^{\lambda_\ell}\right)\right\}.
\end{equation}
\end{Lema}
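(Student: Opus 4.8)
The plan is to argue by induction on $\deg(f)$. The base case $f\in K$ is trivial: take $r=1$, $a_1=f$, and $\lambda_1$ the zero multi-index (so $\mathbf{Q}^{\lambda_1}=1$), noting that the empty product lies in $K[x]$ and the displayed identity holds degenerately. For the inductive step, suppose the statement holds for all polynomials of degree $<\deg(f)$. Since $\mathbf{Q}$ is a complete set for $\mu$, choose $q\in\mathbf{Q}$ with $\deg(q)\le\deg(f)$ and $\mu(f)=\mu_q(f)$, and write the $q$-expansion $f=f_0+f_1q+\cdots+f_sq^s$ with each $\deg(f_j)<\deg(q)\le\deg(f)$. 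The key inequality is that $\deg(f_jq^j)\le\deg(f)$ for each $j$ with $f_j\neq 0$, and in fact whenever $j\ge 1$ one has $\deg(f_j)<\deg(f)$, so the inductive hypothesis applies to each coefficient $f_j$.

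The main step is then to expand each $f_j$ via the inductive hypothesis as $f_j=\sum_{\ell} a_{j,\ell}\mathbf{Q}^{\lambda_{j,\ell}}$ with $\mu(f_j)=\min_\ell\{\mu(a_{j,\ell}\mathbf{Q}^{\lambda_{j,\ell}})\}$, multiply through by $q^j$ (which, since $q\in\mathbf{Q}$, merely shifts the multi-index $\lambda_{j,\ell}$ by $j$ in the coordinate corresponding to $q$, hence still lies in $\N_0^{\mathbf{Q}}$), and collect all the resulting terms over $j$ and $\ell$ into a single sum $f=\sum a_m\mathbf{Q}^{\lambda_m}$. It remains to verify the value equality. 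On one hand $\mu(f)\le\min_m\{\mu(a_m\mathbf{Q}^{\lambda_m})\}$ by the ultrametric inequality applied to any representation. On the other hand, for the terms coming from a fixed $j$ we have $\mu\big(\sum_\ell a_{j,\ell}\mathbf{Q}^{\lambda_{j,\ell}}q^j\big)=\mu(f_jq^j)\ge\mu_q(f)=\mu(f)$, using first the inductive value equality for $f_j$ together with $\mu(q^j)$ being a common summand, and then the definition of the truncation $\mu_q$. Combining, every term satisfies $\mu(a_m\mathbf{Q}^{\lambda_m})\ge\mu(f_jq^j)\ge\mu(f)$ for the appropriate $j$, giving $\min_m\{\mu(a_m\mathbf{Q}^{\lambda_m})\}\ge\mu(f)$, and the reverse inequality was already noted; hence equality.

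The one point requiring care — and the only genuine obstacle — is the bookkeeping around degrees when $\deg(q)=\deg(f)$: in that case $s=1$, $f=f_0+f_1q$ with $f_1\in K$ (a constant, since $\deg(f_1)<\deg(q)=\deg(f)$ forces $\deg(f_1)$ strictly smaller, but $f_1q$ must have degree exactly $\deg(f)$ so $f_1$ is a nonzero constant if the leading terms are to match up to scalar), so the term $f_1q$ is already of the allowed form $a\,\mathbf{Q}^{\lambda}$ with $\lambda$ the indicator of $q$, and only $f_0$ (of strictly smaller degree) needs the inductive hypothesis. Once this edge case is handled, the induction closes; I would also remark that the statement is exactly what is needed to pass from the multiplicative structure of a complete set to an additive generating set, which is how it will feed into the proof of Theorem \ref{mainthem}.
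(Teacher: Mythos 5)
Your proof is correct and follows essentially the same route as the paper: induction on $\deg(f)$, using completeness to pick $q\in\mathbf{Q}$ with $\deg(q)\le\deg(f)$ and $\mu(f)=\mu_q(f)$, applying the inductive hypothesis to the coefficients of the $q$-expansion (which all have degree $<\deg(q)\le\deg(f)$, so your edge-case discussion is not actually needed), shifting the multi-index in the $q$-coordinate, and collecting terms; you even spell out the value verification that the paper leaves implicit, and your base case (constants) is a harmless variant of the paper's degree-one base case. One typo-level slip: the ultrametric inequality gives $\mu(f)\ge\min_m\left\{\mu\left(a_m\mathbf{Q}^{\lambda_m}\right)\right\}$, not $\le$ as written in your first sentence, and it is precisely this direction that serves as the reverse inequality to your term-by-term estimate.
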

\begin{proof}
We will use induction on the degree of $f$. If $\deg(f)=1$, then there exists $i\in I$ such that $\deg(Q_i)=1$ and $\mu_{Q_i}(f)=\mu(f)$. This means that there exist $a_1,a_2\in K$ such that
\[
f=a_1Q_i+a_2\mbox{ and }\mu(f)=\min\{\mu(a_1Q_i),\mu(a_2)\}
\]
and hence \eqref{Equanposrnalo} is satisfied for $f$.

Now consider an integer $n>1$ and assume that for every $f\in K[x]$, if $\deg(f)<n$, then there exist $\lambda_1,\ldots \lambda_r\in\N^\textbf{Q}$ and $a_1,\ldots,a_r\in K$ such that \eqref{Equanposrnalo} is satisfied.

Take $f\in K[x]$ with $\deg(f)=n$. By our assumption on $\textbf{Q}$, there exists $i\in I$ such that
\[
\deg(Q_i)\leq\deg(f)\mbox{ and }\mu(f)=\mu_{Q_i}(f).
\]
This means that
\begin{equation}\label{equatruncation2}
f=f_0+f_1Q_i+\ldots+f_sQ_i^s,
\end{equation}
for some $f_0,\ldots,f_s\in K[x]$ with $\deg(f_\ell)<\deg(Q_i)$ for every $\ell$, $0\leq \ell\leq s$, and
\begin{equation}\label{equatruncation}
\mu(f)=\min_{0\leq \ell \leq s}\left\{\mu\left(f_\ell Q_i^\ell\right)\right\}.
\end{equation}

For each $\ell$, $0\leq \ell\leq s$, since $\deg(f_\ell)<\deg(Q_i)\leq\deg(f)=n$, there exist
\[
\lambda_{\ell,1},\ldots,\lambda_{\ell,r_\ell}\in\N^{\textbf{Q}}\mbox{ and }a_{\ell,1},\ldots, a_{\ell,r_\ell}\in K
\]
such that
\begin{equation}\label{equatruncation4}
f_\ell=\sum_{k=1}^{r_\ell}a_{\ell,k}\textbf{Q}^{\lambda_{\ell,k}}\mbox{ and }\mu(f_\ell)=\min_{1\leq k\leq r_\ell}\left\{\mu\left(a_{\ell,k}\textbf{Q}^{\lambda_{\ell,k}}\right)\right\}.
\end{equation}
By \eqref{equatruncation2}, \eqref{equatruncation} and \eqref{equatruncation4} we deduce that 
\[
f=\sum_{\ell=0}^s\sum_{k=1}^{r_\ell}a_{\ell,k}\textbf{Q}^{\lambda'_{\ell,k}}\mbox{ and }\mu(f)=\min\limits_{\substack{0\leq \ell \leq s\\1\leq k\leq r_\ell}}\left\{\mu\left(a_{\ell,k}\textbf{Q}^{\lambda'_{\ell,k}}\right)\right\},
\]
where
\begin{displaymath}
\lambda'_{\ell,k}\left(j\right)=\left\{
\begin{array}{ll}
\lambda_{\ell,k}(j)+k&\mbox{ if }i=j\\[8pt]
\lambda_{\ell,k}(j)&\mbox{ if }i\neq j
\end{array}.
\right.
\end{displaymath}
This concludes our proof.

\end{proof}

Let $\Gamma$ be an ordered abelian group such that $\mu(K[x])\subseteq \Gamma$ and assume that there exist $\gamma_1,\ldots,\gamma_n\in \Gamma$ for which
\begin{equation}\label{eqieepsilondelta}
\Gamma=\bigcup_{\ell=1}^n\left(\gamma_\ell+v K\right).
\end{equation}
This implies that for every $f\in K[x]$ there exists $a\in K$ such that
\[
\mu\left(\frac{f}{a}\right)\in \{\gamma_1,\ldots,\gamma_n\}.
\]

\begin{Lema}\label{obssobregened}
Suppose that there exist $\gamma_1,\ldots,\gamma_n\in \Gamma$ satisfying \eqref{eqieepsilondelta} such that
\[
0=\gamma_1<\gamma_1<\ldots<\gamma_n<vK_{>0}.
\]
Suppose that $q\in K[x]$ is such that $\mu(q)\in \{\gamma_1,\ldots,\gamma_n\}$. For $a\in K$ if $\mu(aq)\geq 0$, then $a\in \VR_K$. 
\end{Lema}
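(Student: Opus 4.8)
First I would unwind the hypothesis on $q$. Since $\mu(q)\in\{\gamma_1,\ldots,\gamma_n\}$, fix an index $j$ with $\mu(q)=\gamma_j$. By assumption $0\le\gamma_j$ and, what really matters, $\gamma_j<vK_{>0}$, i.e.\ $\gamma_j$ is strictly smaller than every positive element of the value group $vK$. The inequality $\mu(aq)\ge 0$ then reads $v(a)+\gamma_j\ge 0$, that is, $-v(a)\le\gamma_j$.

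Next I would argue by contradiction. Suppose $a\notin\VR_K$, so that $v(a)<0$; then $-v(a)$ is a strictly positive element of $vK$, hence $-v(a)\in vK_{>0}$. The defining property $\gamma_j<vK_{>0}$ now gives $\gamma_j<-v(a)$, which is incompatible with $-v(a)\le\gamma_j$ established above. Therefore $v(a)\ge 0$, i.e.\ $a\in\VR_K$, as claimed. (When $j=1$, so $\gamma_j=0$, the statement is immediate from $\mu(aq)=v(a)\ge 0$; the argument above simply handles all indices $j$ uniformly.)

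I do not expect any genuine obstacle here: the whole content is the order-theoretic observation that a value shift by $-\gamma_j$ with $0\le\gamma_j<vK_{>0}$ cannot turn a negative element of $vK$ into a nonnegative element of $\Gamma$. The decomposition \eqref{eqieepsilondelta} and the enumeration $\gamma_1<\cdots<\gamma_n$ play a role only insofar as they guarantee that $\mu(q)$ is one of the $\gamma_j$ and hence satisfies $\gamma_j<vK_{>0}$.
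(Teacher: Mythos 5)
Your argument is correct and is essentially the same as the paper's: both rewrite $\mu(aq)\ge 0$ as $-v(a)\le\mu(q)=\gamma_j$ (the paper writes $v(a^{-1})\le\mu(q)$) and derive a contradiction with $\gamma_j<vK_{>0}$ if $v(a)<0$. Nothing further is needed.
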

\begin{proof}
Since $\mu(aq)\geq 0$, we have $v\left(a^{-1}\right)\leq\mu(q)$. Hence, $v(a)<0$ would imply that
\[
0< v\left(a^{-1}\right)\leq \mu(q)=\gamma_\ell,\mbox{ for some }\ell, 1\leq \ell\leq n,
\]
and this would be a contradiction to our assumption on the $\gamma_\ell$'s.

\end{proof}

\begin{Teo}\label{Theoremkeypol2}
Suppose that there exist $\gamma_1,\ldots,\gamma_n\in \Gamma$ satisfying \eqref{eqieepsilondelta} such that
\[
0=\gamma_1<\ldots<\gamma_n<vK_{>0}.
\]
Assume that $\{Q_i\}_{i\in I}$ is a complete set of generators for $\mu$. For each $\lambda\in \N_0^{I}$ choose $a_\lambda$ such that
\[
\mu\left(\frac{\textbf Q^\lambda}{a_\lambda}\right)\in\{\gamma_1,\ldots,\gamma_n\}.
\]
Then the $\VR_K$-module
\[
B=\{f\in K[x]\mid \mu f\geq 0\}
\]
is generated by
\[
\left\{\frac{\textbf{Q}^\lambda}{a_\lambda}\left|\ \lambda\in \N_0^I\right.\right\}.
\]
\end{Teo}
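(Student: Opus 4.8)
The plan is to show directly that $B$ coincides with the $\VR_K$-submodule $M$ generated by the elements $\textbf{Q}^\lambda/a_\lambda$. The inclusion $M\subseteq B$ is immediate: by construction $\mu(\textbf{Q}^\lambda/a_\lambda)\in\{\gamma_1,\ldots,\gamma_n\}$, and since $0=\gamma_1\leq\gamma_\ell$ for every $\ell$, each generator lies in $B$; as $B$ is an $\VR_K$-module, $M\subseteq B$. So the content is the reverse inclusion $B\subseteq M$.

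For $B\subseteq M$, take $f\in K[x]$ with $\mu f\geq 0$. First I would apply Lemma \ref{generatimpor} to write $f=\sum_{\ell=1}^r a_\ell\textbf{Q}^{\lambda_\ell}$ with $\mu(f)=\min_\ell\mu(a_\ell\textbf{Q}^{\lambda_\ell})$; in particular $\mu(a_\ell\textbf{Q}^{\lambda_\ell})\geq\mu(f)\geq 0$ for each $\ell$. The goal is to rewrite each term $a_\ell\textbf{Q}^{\lambda_\ell}$ as an $\VR_K$-multiple of the chosen generator $\textbf{Q}^{\lambda_\ell}/a_{\lambda_\ell}$, i.e.\ to show that $a_\ell a_{\lambda_\ell}\in\VR_K$. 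Write $b:=a_\ell a_{\lambda_\ell}$ and $q:=\textbf{Q}^{\lambda_\ell}/a_{\lambda_\ell}$, so that $a_\ell\textbf{Q}^{\lambda_\ell}=bq$ and $\mu(q)\in\{\gamma_1,\ldots,\gamma_n\}$. Since $\mu(bq)=\mu(a_\ell\textbf{Q}^{\lambda_\ell})\geq 0$, Lemma \ref{obssobregened} gives $b\in\VR_K$. Hence $f=\sum_\ell b_\ell\,(\textbf{Q}^{\lambda_\ell}/a_{\lambda_\ell})$ with $b_\ell\in\VR_K$, which exhibits $f\in M$. This completes the argument.

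The step requiring the most care is the passage through Lemma \ref{obssobregened}: one must make sure that the hypothesis $\mu(bq)\geq 0$ is genuinely available, which is why invoking the \emph{second} assertion of Lemma \ref{generatimpor} — that $\mu(f)$ equals the minimum of the $\mu(a_\ell\textbf{Q}^{\lambda_\ell})$, not merely $\leq$ — is essential; without it, individual terms $a_\ell\textbf{Q}^{\lambda_\ell}$ could have negative value even though their sum does not, and the reduction to $\VR_K$-coefficients would fail. Everything else (the compatibility of $\mu$ with the decomposition \eqref{eqieepsilondelta}, which guarantees that an $a_\lambda$ with the required property exists, and the fact that $0=\gamma_1$ makes the generators lie in $B$) is bookkeeping. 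I do not anticipate a genuine obstacle beyond keeping this minimality condition visible throughout.
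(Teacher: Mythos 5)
Your proposal is correct and follows essentially the same route as the paper: decompose $f$ via Lemma \ref{generatimpor} so that every term $a_\ell\textbf{Q}^{\lambda_\ell}$ has value $\geq 0$, then apply Lemma \ref{obssobregened} to conclude $a_\ell a_{\lambda_\ell}\in\VR_K$ and absorb it as the coefficient of the generator $\textbf{Q}^{\lambda_\ell}/a_{\lambda_\ell}$. Your added remarks on the easy inclusion and on the necessity of the minimality in Lemma \ref{generatimpor} are accurate but do not change the argument.
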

\begin{proof}
For any $f\in B$ by Lemma \ref{generatimpor} there exist $\lambda_1,\ldots,\lambda_r\in \N_0^I$ and $a_1,\ldots, a_r\in K$ such that
\[
f=\sum_{\ell=1}^ra_\ell \textbf{Q}^{\lambda_\ell}\mbox{ and }0\leq \min_{1\leq \ell\leq r}\left\{\mu\left(a_\ell\textbf{Q}^{\lambda_\ell}\right)\right\}.
\]
This implies that 
\[
f=\sum_{\ell=1}^ra_\ell a_{\lambda_\ell} \frac{\textbf{Q}^{\lambda_\ell}}{a_{\lambda_\ell}}\mbox{ and }0\leq \min_{1\leq \ell\leq r}\left\{\mu\left( a_\ell a_{\lambda_\ell}\frac{\textbf{Q}^{\lambda_\ell}}{a_{\lambda_\ell}}\right)\right\}.
\]
Hence, for every $\ell$, $1\leq \ell\leq r$, we have
\[
0\leq\mu\left(a_\ell a_{\lambda_\ell}\frac{\textbf{Q}^{\lambda_\ell}}{a_{\lambda_\ell}}\right).
\]
Since $\mu\left(\frac{\textbf{Q}^{\lambda_\ell}}{a_{\lambda_\ell}}\right)\in \{\gamma_1,\ldots,\gamma_n\}$ by Lemma \ref{obssobregened} we deduce that $a_\ell a_{\lambda_\ell}\in \VR_K$. This concludes the proof.
\end{proof}

\begin{Prop}[Proposition 3.4 of \cite{Nov12}]\label{Propcharepsioln}
Let $\Gamma$ be an ordered abelian group and take $\Delta$ a subgroup of $\Gamma$ of finite index. If $e:=[\Gamma:\Delta]=\epsilon(\Gamma\mid\Delta)$, then there exist $\gamma_1,\ldots, \gamma_e\in \Gamma$ such that
\[
\Gamma=\bigcup_{i=1}^{e}\left(\gamma_i+\Delta\right)\mbox{ and }0=\gamma_1<\ldots<\gamma_e<\Delta_{>0}.
\] 
\end{Prop}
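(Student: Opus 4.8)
The plan is to show that the finite set
\[
S:=\{\gamma\in\Gamma\mid 0\leq\gamma<\Delta_{>0}\},
\]
which by hypothesis has exactly $e=\epsilon(\Gamma\mid\Delta)$ elements, is a complete set of representatives for the cosets of $\Delta$ in $\Gamma$; once this is established, listing the elements of $S$ in increasing order produces the desired $\gamma_1,\dots,\gamma_e$.

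First I would verify that distinct elements of $S$ lie in distinct cosets of $\Delta$. Suppose $\gamma,\gamma'\in S$ with $\gamma\leq\gamma'$ and $\gamma'-\gamma\in\Delta$. If $\gamma'-\gamma>0$, then $\gamma'-\gamma\in\Delta_{>0}$, and since $\gamma'<\Delta_{>0}$ we would get $\gamma'<\gamma'-\gamma$, i.e.\ $\gamma<0$, contradicting $\gamma\geq 0$. Hence $\gamma=\gamma'$. This shows that the natural map $S\to\Gamma/\Delta$ is injective, so in general $|S|\leq[\Gamma:\Delta]$ (this is exactly the group-theoretic form of the inequality \eqref{equalieepsilon}). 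Now I invoke the equality hypothesis: since $|S|=\epsilon(\Gamma\mid\Delta)=e=[\Gamma:\Delta]$ and $S$ injects into the $e$-element set $\Gamma/\Delta$, the image of $S$ must be all of $\Gamma/\Delta$, i.e.\ $\Gamma=\bigcup_{\gamma\in S}(\gamma+\Delta)$.

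To finish, I note that $0\in S$, because every element of $\Delta_{>0}$ is strictly positive; thus, writing $S=\{\gamma_1,\dots,\gamma_e\}$ with $0=\gamma_1<\gamma_2<\cdots<\gamma_e$ (possible since $\Gamma$ is totally ordered and $S$ is finite), each $\gamma_i$ satisfies $\gamma_i<\Delta_{>0}$ by membership in $S$, and the cosets $\gamma_i+\Delta$ exhaust $\Gamma$ by the previous paragraph. This gives all the asserted properties. The argument is elementary; the only point requiring a little care is the coset-distinctness step, where one must use that $\gamma'$ lies \emph{below every} positive element of $\Delta$, rather than merely below some positive element. I do not anticipate any real obstacle.
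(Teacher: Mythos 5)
Your argument is correct. Note that the paper does not prove this statement at all --- it is quoted as Proposition 3.4 of \cite{Nov12} --- so there is no internal proof to compare against; your self-contained argument (distinct elements of $S=\{\gamma\in\Gamma\mid 0\leq\gamma<\Delta_{>0}\}$ lie in distinct cosets, hence the hypothesis $|S|=e=[\Gamma:\Delta]$ forces $S$ to be a full set of coset representatives, which one then orders) is exactly the natural route, and your care at the injectivity step, using that $\gamma'$ lies below \emph{every} positive element of $\Delta$, is the only place where anything could go wrong.
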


\begin{proof}[Proof of Theorem \ref{mainthem}]
Since $e:=e(L/K,v)=\epsilon(L/K,v)$, by Proposition \ref{Propcharepsioln} there exist $\gamma_1,\ldots, \gamma_e\in vL$ such that
\[
vL=\bigcup_{\ell=1}^{e}(\gamma_\ell+vK)\mbox{ and }0=\gamma_1<\ldots<\gamma_e<vK_{>0}.
\]

For every $b\in L$ there exists a polynomial $f(x)\in K[x]$ (with $\deg(f)<\deg(g)$) such that $b=f(\eta)$. If $b\in \VR_L$, then $0\leq\nu f$ and hence $f\in B$. By Theorem \ref{Theoremkeypol2}, there exist $a_1,\ldots, a_r\in \VR_K$ and $\lambda_1,\ldots, \lambda_r\in \N_0^I$ such that
\[
b=f(\eta)=\sum_{\ell=1}^ra_j\frac{\textbf{Q}(\eta)^{\lambda_\ell}}{a_{\lambda_\ell}}.
\]
This concludes the proof.
\end{proof}

\section{Key polynomials and the defect formula}\label{keypolydefectformula}
Let $\mu$ be a valuation of $K[x]$ and fix an extension $\overline \mu$ of $\mu$ to $\overline K[x]$, where $\overline K$ is a fixed algebraic closure of $K$. For each $f\in K[x]$ we define
\[
\epsilon(f):=\max\{\overline \mu(x-a)\mid a\mbox{ is a root of }f\}.
\]
By \cite[Remark 3.2]{NovaMP} the value $\epsilon(f)$ does not depend on the extension $\overline \mu$ of $\mu$. A monic polynomial $Q\in K[x]$ is called \textbf{a key polynomial for $\mu$} if
\[
\deg(f)<\deg(Q)\Longrightarrow \epsilon(f)<\epsilon(Q)\mbox{  for all }f\in K[x].
\]
If $Q$ is a key polynomial for $\mu$, then $\mu_Q$ is a valuation (\cite[Proposition 2.6]{Novspivkeypol}). A \textbf{complete sequence of key polynomials} for $\nu$ is a set $\textbf{Q}=\{Q_i\}_{i\in I}$ such that $I$ is well-ordered, the map $i\mapsto Q_i$ is an order preserving map (i.e., for $i,j\in I$ we have $i<j\Lra \epsilon(Q_i)<\epsilon(Q_j)$) and $\textbf{Q}$ is a complete set for $\nu$.

Let $\Gamma_\mu$ be the value group of $\mu$. The \textbf{graded ring of $\mu$} is defined as
\[
\mathcal G_\mu:=\bigoplus_{\gamma\in \Gamma_\mu}\{h\in K[x]\mid \mu(h)\geq \gamma\}/\{h\in K[x]\mid \mu(h)> \gamma\}.
\]
For $f\in K[x]$ for which $\mu(f)\neq \infty$, we define the \textbf{initial form} of $f$ in $\mathcal G_\mu$ by
\[
\inv_\mu(f):=f+\{h\in K[x]\mid \mu(h)> \mu(f)\}\in \mathcal G_\mu.
\]

For a key polynomial $Q$ for $\mu$ we can consider the graded ring of $\mu_Q$ which we denote by $\mathcal G_Q$ (instead of $\mathcal G_{\mu_Q}$). For $f\in K[x]$, with $\mu_Q(f)\neq \infty$, we denote $\inv_Q(f):=\inv_{\mu_Q}(f)$. Let
\[
R_Q:=\langle\{\inv_Q(f)\mid \deg(f)<\deg(Q)\}\rangle\mbox{ and }y_Q:=\inv_Q(Q)\in \mathcal G_Q.
\]
This means that $R_Q$ is the abelian subgroup of $\mathcal{G}_Q$ generated by the initial forms of polynomials of degree smaller than $\deg(Q)$. 
\begin{Prop}\cite[Proposition 4.5]{Nov1}
The set $R_Q$ is a subring of $\mathcal G_Q$, $y_Q$ is transcendental over $R_Q$ and
\[
\mathcal G_Q=R_Q[y_Q].
\]
\end{Prop}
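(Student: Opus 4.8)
The plan is to prove the three assertions together by constructing a degree‑preserving ring homomorphism $\varphi\colon R_Q[T]\to\mathcal G_Q$ with $\varphi(T)=y_Q$ and $\varphi|_{R_Q}=\mathrm{id}$, and showing it is an isomorphism. The main tool is the $Q$‑expansion combined with the evaluation identity
\[
\mu(h)=\overline\mu\big(h(\alpha)\big)\qquad\text{for all }h\in K[x]\text{ with }\deg h<\deg Q,
\]
where $\alpha\in\overline K$ is a root of $Q$ with $\overline\mu(x-\alpha)=\epsilon(Q)$. This identity is a consequence of the key polynomial property: every root $\beta$ of a polynomial of degree $<\deg Q$ satisfies $\overline\mu(x-\beta)<\epsilon(Q)=\overline\mu(x-\alpha)$, so $\overline\mu(\alpha-\beta)=\overline\mu\big((x-\beta)-(x-\alpha)\big)=\overline\mu(x-\beta)$ by the ultrametric inequality, and one multiplies these equalities over the roots of $h$.

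First I would record the structural computation. Since $Q$ is a key polynomial, $\mu_Q$ is a valuation, so $\mathcal G_Q$ is a graded domain, $\inv_Q$ is multiplicative, $\mu_Q(Q)=\mu(Q)$, and $\mu_Q$ agrees with $\mu$ on polynomials of degree $<\deg Q$. Hence for a nonzero $h$ with $Q$‑expansion $h=\sum_\ell h_\ell Q^\ell$ one has $\mu_Q(h)=\min_\ell\mu(h_\ell Q^\ell)$ and
\[
\inv_Q(h)=\sum_{\ell\in S}\inv_Q(h_\ell)\,y_Q^\ell,\qquad S=\{\ell:\mu(h_\ell Q^\ell)=\mu_Q(h)\}.
\]
As $\deg h_\ell<\deg Q$, this writes every homogeneous element of $\mathcal G_Q$ as a sum of elements $\inv_Q(g)y_Q^\ell$ with $\deg g<\deg Q$; thus $\mathcal G_Q=\sum_{\ell\ge 0}R_Q\,y_Q^\ell$ as abelian groups, and (once $R_Q$ is known to be a ring) $\varphi$ is surjective. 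The same formula shows that a nonzero homogeneous element of $R_Q$ is of the form $\inv_Q(g)$ with $\deg g<\deg Q$.

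Next I would show $R_Q$ is a subring. It contains $1=\inv_Q(1)$ and is additively closed by definition, so the point is multiplicative closure: for $f,g$ of degree $<\deg Q$ we have $\inv_Q(f)\inv_Q(g)=\inv_Q(fg)$, and writing the $Q$‑expansion $fg=c_0+c_1Q$ with $\deg c_0,\deg c_1<\deg Q$, the evaluation identity and $Q(\alpha)=0$ give
\[
\mu_Q(fg)=\mu(f)+\mu(g)=\overline\mu\big(f(\alpha)\big)+\overline\mu\big(g(\alpha)\big)=\overline\mu\big((fg)(\alpha)\big)=\overline\mu\big(c_0(\alpha)\big)=\mu(c_0).
\]
Hence $\mu_Q(c_0)\le\mu_Q(c_1Q)$, and $\inv_Q(fg)=\inv_Q(c_0)\in R_Q$ as soon as $c_1=0$ or this inequality is strict. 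Excluding the \emph{resonance} $\mu_Q(c_0)=\mu_Q(c_1Q)$ with $c_1\ne 0$ is the main obstacle: for a general monic $Q$ it can occur, and ruling it out here is precisely where the minimality of $\deg Q$ in the definition of a key polynomial is used. Concretely, such an equality forces $\mu(Q)=\overline\mu\big(c_0(\alpha)/c_1(\alpha)\big)$ with $\deg c_0,\deg c_1<\deg Q$, and one must argue that the bounds $\overline\mu(x-\beta)<\epsilon(Q)$ on the roots $\beta$ of polynomials of degree $<\deg Q$ — i.e.\ the roots of $f$ and $g$ being ``far'' from those of $Q$ — prevent such a ratio from reaching $\mu(Q)$. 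I expect this value estimate to be the technically delicate point; it is essentially a lemma in the theory of key polynomials (available in the references cited above).

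Finally, granting that $R_Q$ is a ring, it remains to check that $\varphi$ is injective, which gives at once that $y_Q$ is transcendental over $R_Q$ and $\mathcal G_Q=R_Q[y_Q]$. Since $T$ has degree $\mu(Q)$, $\varphi$ is homogeneous and $\ker\varphi$ is a homogeneous ideal, so it suffices to exclude nonzero homogeneous elements of $\ker\varphi$. Such an element can be written $\sum_\ell\inv_Q(g_\ell)T^\ell$ with $\deg g_\ell<\deg Q$ and, by homogeneity, $\mu_Q(g_\ell)+\ell\mu(Q)$ constant on the indices $\ell$ with $g_\ell\ne 0$; lifting to $h:=\sum_\ell g_\ell Q^\ell$, whose $Q$‑expansion this is, the structural formula yields $\inv_Q(h)=\sum_\ell\inv_Q(g_\ell)y_Q^\ell=\varphi\big(\sum_\ell\inv_Q(g_\ell)T^\ell\big)=0$, which forces $h=0$ and hence all $g_\ell=0$. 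Therefore $\varphi$ is an isomorphism, completing the proof.
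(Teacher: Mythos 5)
This Proposition is quoted in the paper from \cite[Proposition 4.5]{Nov1} without proof, so the only question is whether your argument is self-contained; it is not. Your framework is sound: the evaluation identity $\mu(h)=\overline\mu(h(\alpha))$ for $\deg h<\deg Q$ is correctly derived from the key-polynomial property via the ultrametric inequality, the structural formula $\inv_Q(h)=\sum_{\ell\in S}\inv_Q(h_\ell)y_Q^\ell$ is valid because $\mu_Q$ is a valuation, and your surjectivity and homogeneity/injectivity arguments (hence the transcendence of $y_Q$) are fine \emph{granted} that $R_Q$ is a ring. But the multiplicative closure of $R_Q$ is exactly the point you leave open. From what you prove you only get the weak inequality: writing $fg=c_0+c_1Q$ with $\deg f,\deg g<\deg Q$, multiplicativity of $\mu_Q$ together with $\mu(c_0)=\overline\mu(c_0(\alpha))=\mu(f)+\mu(g)$ yields $\mu(c_1Q)\geq\mu(c_0)$, and nothing you establish rules out equality. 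If equality (your ``resonance'') occurred, then $\inv_Q(f)\inv_Q(g)=\inv_Q(c_0)+\inv_Q(c_1)y_Q$, and all three assertions of the Proposition would fail simultaneously; so excluding it is not a technical refinement but the entire content of the statement.

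You acknowledge this and defer it to ``a lemma in the theory of key polynomials (available in the references cited above)'' --- but the reference in question is precisely the result being proved, so the proposal is circular at its decisive step. The strict inequality $\mu(c_1)+\mu(Q)>\mu(f)+\mu(g)$ does not follow from the root-counting data you have in hand (the roots of the Euclidean quotient $c_1$ are not controlled by the bounds $\overline\mu(x-\beta)<\epsilon(Q)$ alone); in the literature it is obtained from finer lemmas on $\epsilon$, Hasse derivatives and truncations, and this is where the minimality in the definition of a key polynomial genuinely enters. To make the proof complete you would need to supply that estimate, e.g.\ by proving that $\inv_\mu(Q)$ does not divide the $\mu$-initial form of any polynomial of degree less than $\deg Q$, or by reproducing the derivative-based argument; as written, the proposal proves only the routine parts of the Proposition and assumes its core.
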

In view of the previous proposition, for every $f\in K[x]$, with $\mu_Q(f)\neq \infty$, we can define the \textbf{degree of $f$ with respect to $Q$} as the degree of $\inv_Q(f)$ with respect to $y_Q$, i.e.,
\[
\deg_Q(f):=\deg_{y_Q}(\inv_Q(f)).
\]

For $m\in \N$, we say that $\Psi_m$ is a \textbf{plateau of key polynomials} for $\mu$ if $\Psi_m\neq \emptyset$ and $\mu(\Psi_m)$ does not have a maximum. If there exists $F\in K[x]$ such that
\begin{equation}\label{equmlimiekeypoly}
\mu_Q(F)<\mu(F)\mbox{ for every }Q\in \Psi_m,
\end{equation}
then any monic polynomial $F$ of smallest degree among polynomials satisfying \eqref{equmlimiekeypoly} is called a \textbf{limit key polynomial for }$\Psi_m$. In this case, it follows from \cite{Vaq0} that there exists $Q\in \Psi_m$ such that 
\[
\deg_{Q'}(F)=\deg_Q(F)\mbox{ for every }Q'\in\Psi_m\mbox{ with }\mu(Q')\geq \mu(Q).
\]
We define the \textbf{defect of $\Psi_m$} as
\[
d(\Psi_m):=\deg_{Q}(F).
\]

The next result is called \textbf{the defect formula}.
\begin{Teo}\cite[Theorem 6.14]{NN}\label{defectformu}
Let $(L/K,v)$ be a simple algebraic valued field extension and fix a generator $\eta$ of $L/K$. Consider the valuation $\nu$ of $K[x]$ defined by $v$ and $\eta$ and let $m_1,\ldots,m_r\in\N$ be all the natural numbers $m$ for which $\Psi_m$ is a plateau for $\nu$. Then
\[
d(L/K,v)=\prod_{\ell=1}^r d(\Psi_{m_\ell}).
\]
\end{Teo}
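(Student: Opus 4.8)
The plan is to reconstruct $\nu$ through the chain of truncations attached to a complete sequence of key polynomials and to track how the degree, the ramification index, the residue degree and the defect multiply along this chain; the plateaux will be precisely the stages at which the degree grows while the value group and the residue field stay fixed, the growth factor there being the defect of the plateau. First I would reduce to the case where $(K,v)$ is henselian. Since $d(L/K,v)$ is defined through the henselizations, while $e(L/K,v)$ and $f(L/K,v)$ are unchanged under henselization, and since the values $\epsilon(f)$ --- hence the notions of key polynomial, plateau and limit key polynomial, together with the numbers $d(\Psi_m)$ --- are computed from $\overline\mu$ on $\overline K[x]$ and are insensitive to replacing $K$ by $K^h$, it is enough to prove the formula after this reduction. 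Over the henselian field $v$ extends uniquely to $L^h=K^h(\eta)$, the minimal polynomial $g$ of $\eta$ is irreducible over $K^h$, and
\[
\deg(g)=[L^h:K^h]=e(L/K,v)\cdot f(L/K,v)\cdot d(L/K,v).
\]

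Next I would fix a complete sequence of key polynomials $\{Q_i\}_{i\in I}$ for $\nu$. As $[L:K]<\infty$, the degrees $\deg(Q_i)$ are bounded, so there are only finitely many degree jumps and the associated truncations form a finite MacLane--Vaqui\'e chain
\[
\mu_0\leq\mu_1\leq\cdots\leq\mu_N=\nu
\]
with top key polynomial $g$. To each augmentation $\mu_i\to\mu_{i+1}$ I attach its relative ramification index $e_i$ and its relative residue degree $f_i$, read off from the growth of the value group and of the residue field of $\mu_i$. The core of the argument is the following dichotomy. At an \emph{ordinary} augmentation one has $\deg(Q_{i+1})=e_if_i\deg(Q_i)$, and these are the only steps at which the value group or the residue field grows. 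A \emph{limit} augmentation occurs exactly when $\deg(Q_i)=m_\ell$ for some plateau $\Psi_{m_\ell}$ and $Q_{i+1}$ is a corresponding limit key polynomial $F$; there the value group and the residue field are unchanged, so $e_i=f_i=1$, while the degree jumps by $\deg(F)=\deg_Q(F)\cdot\deg(Q_i)=d(\Psi_{m_\ell})\cdot m_\ell$.

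Granting this dichotomy, the proof finishes by multiplying the degree factors along the chain. The value group grows only at ordinary steps and its total index is $(vL:vK)$, so $\prod_{\mathrm{ord}}e_i=e(L/K,v)$; likewise $\prod_{\mathrm{ord}}f_i=[Lv:Kv]=f(L/K,v)$. Therefore
\[
e(L/K,v)f(L/K,v)d(L/K,v)=\deg(g)=\Big(\prod_{\mathrm{ord}}e_i\Big)\Big(\prod_{\mathrm{ord}}f_i\Big)\prod_{\ell=1}^r d(\Psi_{m_\ell})=e(L/K,v)f(L/K,v)\prod_{\ell=1}^r d(\Psi_{m_\ell}),
\]
and cancelling $e(L/K,v)f(L/K,v)$ gives $d(L/K,v)=\prod_{\ell=1}^r d(\Psi_{m_\ell})$.

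The main obstacle is the limit-augmentation lemma used above: showing that crossing a plateau is immediate (neither the value group nor the residue field changes) and that the degree is multiplied by exactly $d(\Psi_m)=\deg_Q(F)$. This is where the structure $\mathcal G_Q=R_Q[y_Q]$ and the minimality in the definition of $F$ are used: one must show that $\inv_Q(F)$ is, up to a unit of $R_Q$, a power of $y_Q$ with exponent $d(\Psi_m)$ independent of the chosen $Q\in\Psi_m$ with $\nu(Q)$ large enough (the stabilization provided by \cite{Vaq0}), and that $\nu(F)$ already lies in the value group attained before the plateau, with residue already attained. A secondary point, to be verified with care, is the invariance of the plateaux and of the numbers $d(\Psi_m)$ under the passage to the henselization asserted in the reduction step.
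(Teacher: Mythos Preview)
The paper does not prove this theorem; it is quoted verbatim as \cite[Theorem 6.14]{NN} and used as a black box. There is therefore no ``paper's own proof'' to compare against.

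Your outline is the standard MacLane--Vaqui\'e strategy and is, in broad strokes, the approach of \cite{NN}: build a finite chain of truncations terminating at $\nu$, factor $\deg(g)$ as the product of the degree jumps, identify ordinary jumps with $e_if_i$ and limit jumps with $d(\Psi_{m_\ell})$, and match the product against $e\cdot f\cdot d$ in the henselian case. The two points you flag as obstacles are exactly the substance of the argument in \cite{NN}. First, the ``limit-augmentation lemma'': one must show that along a plateau the value group and residue field of the truncations stabilize and that $\deg(F)=d(\Psi_m)\cdot m$; the latter requires knowing that a limit key polynomial $F$ of minimal degree has $\inv_Q(F)$ equal, up to a unit of $R_Q$, to a power of $y_Q$, so that $\deg(F)=\deg_Q(F)\cdot\deg(Q)$ rather than merely $\deg(F)\geq\deg_Q(F)\cdot\deg(Q)$. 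Second, the bookkeeping that $\prod_{\mathrm{ord}}e_i=e(L/K,v)$ and $\prod_{\mathrm{ord}}f_i=f(L/K,v)$ is not automatic: one needs that the value group and residue field of $\nu$ (as a valuation on $K[x]$ with $\nu(g)=\infty$) coincide with $vL$ and $Lv$, and that the relative invariants telescope correctly along the chain. Your henselization reduction is correct in spirit, but the invariance of the sets $\Psi_m$ and of $d(\Psi_m)$ under passage to $K^h$ also needs an argument (key polynomials over $K$ remain key polynomials over $K^h$ because $\epsilon$ is computed in $\overline K$, and a plateau over $K$ is still a plateau over $K^h$ since $v(\eta-K)$ is cofinal in $v(\eta-K^h)$). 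None of these are wrong turns, but each is a genuine lemma rather than a remark, and together they constitute most of the proof in \cite{NN}.
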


For a polynomial $f\in K[x]$ and $j\in\N_0$ we denote by $\partial_jf$ the $j$-Hasse derivative of $f$. Assume that $(L/K,v)$ is pure in $\eta$ and let $g$ be the minimal polynomial of $\eta$ over $K$. By \cite[Corollary 3.4]{Novspivkeypol} there exists $c\in K$ such that for every $c'\in K$, if $v(\eta-c')\geq v(\eta-c)$, then
\begin{equation}\label{eqsnkaplansmmine}
\beta_\ell:=v\left(\partial_\ell g(c)\right)=v\left(\partial_\ell g(c')\right)\mbox{ for every }\ell, 1\leq\ell\leq \deg(g).
\end{equation}

\begin{Prop}\label{propdefectcasepure}
Assume that $(L/K,v)$ is pure in $\eta$ and let $g$ be the minimal polynomial of $\eta$ over $K$. Assume that $\Psi_1$ is a plateau. Then there exists $c\in K$ such that for every $c'\in K$, if $v(\eta-c')\geq v(\eta-c)$, then
\begin{equation}\label{eqauatidiffe1}
\beta_d+d v(\eta-c')<\beta_\ell+\ell v(\eta-c')\mbox{ for every }\ell, d<\ell\leq \deg(g).
\end{equation}
\end{Prop}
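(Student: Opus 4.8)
The plan is to recognise the integer $d=d(\Psi_1)$ of the statement as a $Q$-degree of $g$ itself, and then to read off the required inequality from the definition of that $Q$-degree, using the plateau hypothesis to make the choice of $c$ uniform. I would first record harmless reductions: we may assume $n:=\deg(g)\ge 2$, for otherwise $S:=\{v(\eta-c')\mid c'\in K\}$ contains $\infty$ and $\Psi_1$ is not a plateau; and since every monic linear polynomial is a key polynomial for $\nu$, we have $\Psi_1=\{x-c'\mid c'\in K\}$ with $\nu(x-c')=v(\eta-c')$, so "$\Psi_1$ is a plateau" means precisely that $S$ has no maximum. Fix also the element $\tilde c\in K$ produced by \eqref{eqsnkaplansmmine} (renamed to avoid collision with the $c$ of the statement).

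The crux is to prove that $g$ is a limit key polynomial for $\Psi_1$. That $g$ satisfies \eqref{equmlimiekeypoly} is immediate: $\nu(g)=v(g(\eta))=\infty$, whereas the leading term of the $(x-c')$-expansion of $g$ already forces $\nu_{x-c'}(g)\le n\,v(\eta-c')<\infty$ for every $c'\in K$. To see that no polynomial of smaller degree satisfies \eqref{equmlimiekeypoly}, suppose some $F\in K[x]$ with $\deg(F)<n$ does; then there is such an $F$ of minimal degree $\delta$, which is then, by definition, a limit key polynomial for $\Psi_1$. No monic linear polynomial satisfies \eqref{equmlimiekeypoly}: for $F=x-c$ and, by the plateau hypothesis, any $c'$ with $v(\eta-c')>v(\eta-c)$ one has $v(c-c')=v(\eta-c)$, whence $\nu_{x-c'}(x-c)=\min\{v(\eta-c'),v(c-c')\}=v(\eta-c)=\nu(x-c)$; so $\delta\ge 2$. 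On the other hand, by the structure theory of limit key polynomials (\cite{Vaq0}) the existence of the limit key polynomial $F$ of degree $\delta$ forces $\nu$ to have a key polynomial of degree $\delta$, and since $1<\delta<n$ this contradicts purity of $(L/K,v)$ in $\eta$. Hence $g$ is a limit key polynomial for $\Psi_1$; taking it as the limit key polynomial in the definition of $d(\Psi_1)$, we get $d=\deg_{x-c^\ast}(g)$, where $c^\ast\in K$ is the element furnished by \cite{Vaq0}, which moreover satisfies $\deg_{x-c'}(g)=d$ for every $c'\in K$ with $v(\eta-c')\ge v(\eta-c^\ast)$.

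To conclude, I would use that $S$ has no maximum to pick $c\in K$ with $v(\eta-c)\ge\max\{v(\eta-c^\ast),\,v(\eta-\tilde c)\}$, and fix any $c'\in K$ with $v(\eta-c')\ge v(\eta-c)$. Then \eqref{eqsnkaplansmmine} gives $v(\partial_\ell g(c'))=\beta_\ell$ for $1\le\ell\le n$, and the property of $c^\ast$ gives $\deg_{x-c'}(g)=d$. Writing $g=\sum_{\ell=0}^{n}\partial_\ell g(c')(x-c')^\ell$, the number $\deg_{x-c'}(g)$ is the largest $\ell$ for which $v(\partial_\ell g(c'))+\ell\,v(\eta-c')$ attains the minimum $\nu_{x-c'}(g)$; and since $g(c')=-\sum_{\ell\ge 1}\partial_\ell g(c')(\eta-c')^\ell$ has value at least $\min_{1\le\ell\le n}\{v(\partial_\ell g(c'))+\ell\,v(\eta-c')\}$, that minimum is already attained at some $\ell\ge 1$. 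Hence $d\ge 1$ and $\nu_{x-c'}(g)=\beta_d+d\,v(\eta-c')$, while maximality of $d$ gives $\beta_\ell+\ell\,v(\eta-c')>\beta_d+d\,v(\eta-c')$ for every $\ell$ with $d<\ell\le n$ — which is exactly the asserted inequality.

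I expect the main obstacle to be the minimality-of-degree claim, i.e. ruling out limit key polynomials of degree strictly between $1$ and $n$; this is the only point where purity enters, and it rests on the fact — which I would need to pin down in precise form from the key-polynomial literature (\cite{Vaq0} together with the results recalled in Section~\ref{keypolydefectformula}) and check holds in the present, possibly non-henselian and possibly inseparable, setting — that a limit key polynomial of intermediate degree produces a genuine key polynomial of that degree for $\nu$. Everything else is routine: the translation between $\deg_{x-c'}(g)$ and the Newton-polygon data $\beta_\ell+\ell\,v(\eta-c')$, and the use of the absence of a maximum in $S$ to absorb the two thresholds $v(\eta-c^\ast)$ and $v(\eta-\tilde c)$ into one $c$.
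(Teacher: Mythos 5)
Your proof follows essentially the same route as the paper's: identify $g$ as a limit key polynomial for $\Psi_1$ (using purity and the plateau hypothesis), invoke the stabilization of $\deg_{x-c'}(g)=d(\Psi_1)$ from \cite{Vaq0} together with \eqref{eqsnkaplansmmine}, and translate ``$d$ is the largest index attaining the minimum'' into the asserted Newton-polygon inequality. The one fact you flag as needing a citation --- that a limit key polynomial of degree strictly between $1$ and $n$ would itself be a key polynomial of that degree for $\nu$, contradicting purity --- is precisely the standard result the paper leaves implicit when it asserts that purity makes $g$ a limit key polynomial for $\Psi_1$, so your argument is, if anything, more detailed than the paper's and contains no gap.
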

\begin{proof}
Since $(L/K,v)$ is pure in $\eta$, for every $m$, $1<m<n:=\deg(g)=[L:K]$, we have $\Psi_m=\emptyset$. In particular, $g$ is a limit key polynomial for $\Psi_1$. Set $d=d(\Psi_1)$. For every $c\in K$, the $(x-c)$-expansion of $g$ is
\[
g=g(c)+\partial g(c)(x-c)+\ldots+\partial_ng(c)(x-c)^n.
\]
By the defect formula, this implies that there exists $c\in K$ such that if $v(\eta-c')\geq v(\eta-c)$, then $d=d_{x-c'}g$. We can take $c\in K$ so that \eqref{eqsnkaplansmmine} is satisfied for every $c'\in K$ for which $v(\eta-c')\geq v(\eta-c)$. By definition of $d_{x-c'}g$, this implies that for $\ell$, $d<\ell\leq n$, we have
\[
\nu\left(\partial_dg(c')(x-c')^d\right)<\nu\left(\partial_\ell g(c')(x-c')^\ell\right).
\]
This concludes the proof of the proposition.
\end{proof}

\section{About Knaf's conjecture}
In this section we use Theorem \ref{mainthem} and the theory of key polynomials to show Knaf's conjecture for pure extensions. 
\begin{Lema}\label{corsequenclpo}
If $(L/K,v)$ is pure in $\eta$, then $v(x-K)$ is a complete set for $\nu$. In particular, for every set $\{c_i\}_{i\in I}\subseteq K$ such that $\{v(\eta-c_i)\}_{i\in I}$ is well-ordered and cofinal in $v(\eta-K)$, the set $\{x-c_i\}_{i\in I}$ is a complete sequence of key polynomials for $\nu$.
\end{Lema}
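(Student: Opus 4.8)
The plan is to prove the two assertions in turn, deducing the second from the first. For the first assertion, I need to show that $v(\eta - K) = \{v(f(\eta)) : f \in K[x]\} = \nu(K[x])$ — or more precisely, that for every $f \in K[x]$ there is some $c \in K$ with $\deg(x-c) \le \deg(f)$ and $\nu(f) = \nu_{x-c}(f)$, i.e. the truncation of $\nu$ at a monic linear polynomial already computes $\nu(f)$. First I would recall that purity in $\eta$ means $\Psi_m = \emptyset$ for all $1 < m < n$, so the only possible degrees of key polynomials below $n = \deg(g)$ are $1$ (and possibly $g$ itself at degree $n$, which is a limit key polynomial rather than one contributing to a truncation of lower degree). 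The crucial input is that a complete sequence of key polynomials always exists (by the general theory of key polynomials, e.g.\ \cite{Novspivkeypol, Vaq0, NN}); under the purity hypothesis every member of such a sequence of degree strictly less than $n$ has degree $1$, and the degree-$n$ piece, if present, is handled because any $f$ with $\deg(f) < n$ — which is all we need, since $L = K(\eta)$ lets us represent elements of $L$ by polynomials of degree $< n$ — is already covered by the linear part of the sequence.

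More carefully, the argument runs as follows. Take $f \in K[x]$; I may assume $\deg(f) < n$, since for the statement ``$v(x-K)$ is a complete set for $\nu$'' I must handle all $f$, but I can reduce the general case to this one by first truncating at $g$ (the minimal polynomial), writing $f = f_0 + f_1 g + \dots$; since $\nu(g) = \nu(g(\eta)) = v(0) = \infty$, only the $f_0$ term survives and $\nu(f) = \nu(f_0)$ with $\deg(f_0) < n$. Wait — this needs care: $\nu$ is a valuation of $K[x]$, not of $L$, so $\nu(g) = \infty$ and $\nu(f) = \nu_g(f) = \nu(f_0)$, which is exactly the truncation property at $g$; so it suffices to treat $\deg(f) < \deg(g)$. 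For such $f$, pick a complete sequence of key polynomials $\textbf{Q} = \{Q_i\}$ for $\nu$; by completeness there is $Q_i$ with $\deg(Q_i) \le \deg(f) < n$ and $\nu(f) = \nu_{Q_i}(f)$. By purity, $\deg(Q_i) = 1$, so $Q_i = x - c$ for some $c \in K$, proving $v(x-K)$ is a complete set.

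For the second assertion, suppose $\{c_i\}_{i\in I}$ is given with $\{v(\eta - c_i)\}_{i \in I}$ well-ordered and cofinal in $v(\eta - K)$. Each $x - c_i$ is trivially a key polynomial for $\nu$ (a monic linear polynomial is always a key polynomial). The map $i \mapsto x - c_i$ is order preserving because $\epsilon(x - c_i) = \overline\nu(x - c_i) = v(\eta - c_i)$ and $I$ is ordered compatibly with these values — here I may need to pass to a well-ordering of $I$ induced by the values, or assume it is given, in line with the definition of ``complete sequence of key polynomials.'' The remaining point is completeness of $\{x - c_i\}$: given $f$ with $\deg(f) < n$, the first assertion gives some $c \in K$ with $\nu(f) = \nu_{x-c}(f)$; by cofinality pick $i$ with $v(\eta - c_i) \ge v(\eta - c)$, and then a standard monotonicity property of truncations — namely that $\nu_{x-c'}(f) \ge \nu_{x-c}(f)$ whenever $v(\eta - c') \ge v(\eta - c)$ for $\deg(f)$ below the relevant threshold, combined with $\nu_{x-c'}(f) \le \nu(f)$ always — forces $\nu(f) = \nu_{x - c_i}(f)$. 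I expect this last monotonicity step to be the main technical obstacle: it is the assertion that once a linear truncation value stabilizes it equals $\nu(f)$, which ultimately rests on the structure of $\nu$ having no key polynomials of intermediate degree and on the behavior of the $(x-c)$-expansion coefficients $\partial_\ell g$ under increasing $v(\eta - c)$, exactly the phenomenon isolated in \eqref{eqsnkaplansmmine} and Proposition \ref{propdefectcasepure}.
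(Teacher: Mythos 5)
Your core argument is essentially the paper's. For $\deg(f)<[L:K]$ you use purity plus the existence of key polynomials of degree $\leq\deg(f)$ whose truncation computes $\nu(f)$ (you get this from the existence of a complete sequence of key polynomials; the paper quotes \cite[Lemma 2.11]{Novspivkeypol} for exactly this), concluding that the relevant key polynomial must be linear; and for the second assertion you use the monotonicity $\nu_{x-c}(f)\leq\nu_{x-c_i}(f)\leq\nu(f)$ when $v(\eta-c_i)\geq v(\eta-c)$ together with cofinality, which is precisely the paper's argument via \cite[Proposition 2.10 \textbf{(iii)}]{Novspivkeypol}. One clarification: the step you flag as the ``main technical obstacle'' is a general, quotable property of truncations at key polynomials of increasing $\epsilon$; it does not rest on purity, on \eqref{eqsnkaplansmmine}, or on Proposition \ref{propdefectcasepure}.

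The one step in your plan that does not work is the reduction of $\deg(f)\geq n$ to $\deg(f)<n$. From the $g$-expansion you correctly get $\nu(f)=\nu(f_0)$, and then $\nu(f_0)=\nu_{x-c}(f_0)$ for some $c$, but completeness requires $\nu(f)=\nu_{x-c}(f)$, i.e.\ the truncation of the $(x-c)$-expansion of $f$ itself, and truncation only gives $\nu_{x-c}(f)\leq\nu(f)$. In fact the claim fails for such $f$: if $g\mid f$ then $\nu(f)=\infty$ while $\nu_{x-c}(f)<\infty$ for every $c$, and even for $f=a+g$ with $v(a)$ larger than every value $\beta_1+v(\eta-c)$ no linear truncation computes $\nu(f)$. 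So the completeness assertion should be understood (and proved) only for polynomials of degree less than $[L:K]$ — which is exactly what the paper's own proof does (it begins ``Take any $f\in K[x]$ with $\deg(f)<[L:K]$'') and all that is needed in its applications; your extra reduction should simply be dropped rather than repaired.
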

\begin{proof}
Take any $f\in K[x]$ with $\deg(f)<[L:K]$. By hypothesis, $\Psi_m=\emptyset$ for every $m$, $1<m\leq \deg(f)$. Hence, by \cite[Lemma 2.11]{Novspivkeypol}, there exists $c\in K$ such that $\nu_{x-c}(f)=\nu(f)$. The second statement follows from the fact that if $v(\eta-c_i)>v(\eta-c)$, then by \cite[Proposition 2.10 \textbf{(iii)}]{Novspivkeypol}
\[
\nu_{x-c}(f)\leq \nu_{x-c_i}(f)\leq \nu(f).
\]
\end{proof} 
\begin{Cor}
As in the notation of Lemma \ref{corsequenclpo}, for every $f\in K[x]$, $\deg(f)<[L:K]$, we have
\begin{equation}\label{equabtomfualteL}
vf(\eta)=\min_{0\leq \ell\leq \deg(f)}\{v\left(\partial_\ell f(c_i)(\eta-c_i)^\ell\right)\}\mbox{ for some }i\in I. 
\end{equation}
\end{Cor}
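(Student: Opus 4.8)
The statement is an immediate consequence of Lemma \ref{corsequenclpo} together with the definitions of the $q$-expansion and of the truncation $\nu_q$; the plan is simply to unwind these definitions. First I would dispose of the degenerate case $\deg(f)=0$: then $f$ is a constant, $f(\eta)=f=\partial_0f(c_i)$ for every $i$, and the right-hand side of \eqref{equabtomfualteL} is just $v(\partial_0f(c_i))=vf$, so the equality holds trivially (and one need not invoke completeness, which would require a polynomial $x-c_i$ of degree $\leq 0$).

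For $\deg(f)\geq 1$ the argument runs as follows. By Lemma \ref{corsequenclpo} the set $\{x-c_i\}_{i\in I}$ is a complete sequence of key polynomials, hence in particular a complete set for $\nu$; applying the defining property \eqref{eqabotdegpol} of a complete set to $f$ produces an index $i\in I$ with $\deg(x-c_i)=1\leq\deg(f)$ and $\nu(f)=\nu_{x-c_i}(f)$. Next I would recall that the $(x-c_i)$-expansion of $f$ is exactly its Hasse--Taylor expansion, namely $f=\sum_{\ell=0}^{\deg(f)}\partial_\ell f(c_i)\,(x-c_i)^\ell$, the coefficients $\partial_\ell f(c_i)$ lying in $K$ and hence having degree $<1=\deg(x-c_i)$ as required in the definition of a $q$-expansion. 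Substituting this into the definition of the truncation gives $\nu_{x-c_i}(f)=\min_{0\leq\ell\leq\deg(f)}\{\nu(\partial_\ell f(c_i)(x-c_i)^\ell)\}$. Finally, since $\nu g=v(g(\eta))$ for every $g\in K[x]$ and $\partial_\ell f(c_i)\in K$, each term rewrites as $\nu(\partial_\ell f(c_i)(x-c_i)^\ell)=v(\partial_\ell f(c_i)(\eta-c_i)^\ell)$, while $\nu(f)=vf(\eta)$; combining these with $\nu(f)=\nu_{x-c_i}(f)$ yields \eqref{equabtomfualteL} for this $i$.

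I do not expect any genuine obstacle: all the substantive work (the existence of the $c_i$ and the fact that $\{x-c_i\}_{i\in I}$ is complete) is already contained in Lemma \ref{corsequenclpo}, and the corollary merely records the explicit form taken by $\nu_{x-c}$ once one identifies the $(x-c)$-expansion coefficients with the Hasse derivatives of $f$ at $c$. The only mild point requiring care is the bookkeeping of the constant case $\deg(f)=0$ noted above; everything else is a direct translation through the identity $\nu(\cdot)=v(\cdot(\eta))$.
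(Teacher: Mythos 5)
Your proposal is correct and follows essentially the same route as the paper: invoke completeness of $\{x-c_i\}_{i\in I}$ from Lemma \ref{corsequenclpo} to get an $i$ with $\nu_{x-c_i}(f)=\nu(f)$, identify the $(x-c_i)$-expansion with the Hasse--Taylor expansion, and translate through $\nu(\cdot)=v(\cdot(\eta))$. Your separate treatment of the constant case is a small extra precaution the paper omits, but it does not change the argument.
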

\begin{proof}
For $i\in I$ such that $\nu_{x-c_i}(f)=\nu(f)$, since the $(x-c_i)$-expansion of $f$ is
\[
f(x)=\sum_{\ell=0}^{\deg(f)}\partial_\ell f(c_i)(x-c_i)^\ell
\]
this implies that
\[
vf(\eta)=\nu f=\min_{0\leq \ell\leq \deg(f)}\left\{\nu\left(\partial_\ell f(c_i)(x-c_i)^\ell\right)\right\}=\min_{0\leq \ell\leq \deg(f)}\left\{v\left(\partial_\ell f(c_i)(\eta-c_i)^\ell \right)\right\}
\]
\end{proof}
\begin{Lema}\label{generationofideals}
If $(L/K,v)$ is pure in $\eta$ and $e(L/K,v)=1$, then
\begin{equation}\label{equatinequali}
\VR_L=\VR_K\left[\frac{\eta-c}{a}\left |\right. c,a\in K\mbox{ and }vd\leq v(\eta-c)\right]. 
\end{equation}
\end{Lema}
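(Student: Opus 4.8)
The plan is to derive \eqref{equatinequali} from Lemma~\ref{corsequenclpo} together with Proposition~\ref{Corfinitelgenalg}; the role of the hypothesis $e(L/K,v)=1$ will be simply to place the scalars we need inside $K$.

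First I would record that $e(L/K,v)=1$ means $(vL:vK)=1$, that is, $vL=vK$. Consequently, for every $c\in K$ the value $\nu(x-c)=v(\eta-c)$ already lies in $vK$ (here $\eta\neq c$, which is automatic once $[L:K]>1$; if $[L:K]=1$ there is nothing to prove), so we may choose $a_c\in K$ with $v(a_c)=v(\eta-c)$.

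Next, since $(L/K,v)$ is pure in $\eta$, Lemma~\ref{corsequenclpo} shows that the family $\{x-c\}_{c\in K}$ is a complete set for $\nu$. Applying Proposition~\ref{Corfinitelgenalg} to this complete set, indexed by $I=K$ with $Q_c=x-c$ and with the scalars $a_c$ chosen above, gives
\[
\VR_L=\VR_K\left[\left.\frac{\eta-c}{a_c}\ \right|\ c\in K\right].
\]
Each generator on the right has the form $(\eta-c)/a$ with $c,a\in K$ and $v(a)=v(\eta-c)$, in particular $v(a)\leq v(\eta-c)$; hence $\VR_L$ is contained in the ring appearing on the right-hand side of \eqref{equatinequali}. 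For the reverse inclusion I would note that whenever $c,a\in K$ satisfy $v(a)\leq v(\eta-c)$ one has $v\bigl((\eta-c)/a\bigr)=v(\eta-c)-v(a)\geq 0$, so $(\eta-c)/a\in\VR_L$; thus that ring is contained in $\VR_L$ as well. Combining the two inclusions yields \eqref{equatinequali}.

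I do not expect a genuine obstacle: the argument is an assembly of the two cited results. The single point where the hypothesis is actually used is the choice of the $a_c$ inside $K$, which requires $\nu(x-c)\in vK$ for all $c$, and this is exactly the content of $vL=vK$, i.e.\ of $e(L/K,v)=1$; without it Proposition~\ref{Corfinitelgenalg} could not be applied to the complete set $\{x-c\}_{c\in K}$.
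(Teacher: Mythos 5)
Your proposal is correct and is essentially the paper's own argument: both rest on purity giving completeness of the linear polynomials $\{x-c\}_{c\in K}$ (Lemma \ref{corsequenclpo}) and on $e(L/K,v)=1$ to choose $a\in K$ with $va=v(\eta-c)$. The only cosmetic difference is that the paper inlines what you cite as Proposition \ref{Corfinitelgenalg} --- it expands $b=f(\eta)$ in its $(x-c)$-expansion via \eqref{equabtomfualteL} and observes that $vb\geq 0$ forces the rescaled coefficients $\partial_\ell f(c)a^\ell$ into $\VR_K$ --- while your two-inclusion bookkeeping at the end corresponds to the paper's opening remark that the right-hand side of \eqref{equatinequali} is obviously contained in $\VR_L$.
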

\begin{proof}
The right hand side of \eqref{equatinequali} is clearly contained in its left hand side. For any $b\in L$, write $b=f(\eta)$ for some $f\in K[x]$, $\deg(f)<n$. Since $(L/K,v)$ is pure in $\eta$, by \eqref{equabtomfualteL} there exists $c\in K$ such that
\begin{equation}\label{eqantinoesxpalnnyb}
vb=\min_{0\leq \ell\leq \deg(f)}\left\{v\left(\partial_\ell f(c)(\eta-c)^\ell\right)\right\}. 
\end{equation}
Take $d\in K$ such that $v(\eta-c)=vd$. Then
\begin{equation}\label{eqnatuonsepxpi}
b=f(\eta)=\sum_{\ell=0}^{\deg(f)}\partial_\ell f(c)(\eta-c)^\ell=\sum_{\ell=0}^{\deg(f)}\partial_\ell f(c)a^\ell\left(\frac{\eta-c}{a}\right)^\ell.
\end{equation}
If $b\in \VR_L$, then by \eqref{eqantinoesxpalnnyb} we deduce that
\[
0\leq vb=\min_{0\leq \ell\leq \deg(f)}\left\{v\left(\partial_\ell f(c)d^\ell\right)\right\}
\]
and by \eqref{eqnatuonsepxpi} we conclude that
\[
b\in \VR_K\left[\frac{\eta-c}{a}\right].
\]
\end{proof}
The next result is a particular case of Proposition \ref{Propimportan}. We present its proof here in order to illustrate our method.

\begin{Prop}\label{pureeigualepsil}
Assume that $(L/K,v)$ is pure in $\eta$ and $e(L/K,v)=1$. If $d(L/K,v)=1$, then
\[
\VR_L=\VR_K\left[\frac{\eta-\overline c}{a}\right]_v
\]
for some $a,\overline c\in K$. 
\end{Prop}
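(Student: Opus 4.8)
The plan is to combine Lemma \ref{generationofideals} with the defect formula and Proposition \ref{propdefectcasepure}, using $d(L/K,v)=1$ to replace the generating family $\{(\eta-c)/a\}$ by a single generator. Since $(L/K,v)$ is pure in $\eta$ and $e(L/K,v)=1$, Lemma \ref{generationofideals} gives
\[
\VR_L=\VR_K\left[\frac{\eta-c}{a}\ \middle|\ c,a\in K\mbox{ and }vd\leq v(\eta-c)\right],
\]
so it suffices to show that a single pair $(\overline c, a)$ already generates. The key point is that $d(L/K,v)=1$ forces the defect $d=d(\Psi_1)$ to equal $1$ (if $\Psi_1$ is a plateau at all; if there is no plateau, then by the defect formula $d(L/K,v)$ is an empty product $=1$, and in fact a maximal key polynomial of degree $1$ exists, making the argument even easier). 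So assume $\Psi_1$ is a plateau with $d(\Psi_1)=1$.

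First I would apply Proposition \ref{propdefectcasepure} with $d=1$: there exists $\overline c\in K$ such that for every $c'\in K$ with $v(\eta-c')\geq v(\eta-\overline c)$ we have
\[
\beta_1+v(\eta-c')<\beta_\ell+\ell\, v(\eta-c')\mbox{ for every }\ell,\ 1<\ell\leq \deg(g),
\]
where $\beta_\ell=v(\partial_\ell g(\overline c))$, and moreover \eqref{eqsnkaplansmmine} holds so these valuations are independent of the choice of such $c'$. Fix $a\in K$ with $va=v(\eta-\overline c)$, so that $u:=(\eta-\overline c)/a$ is a unit in $\VR_L$ (indeed $vu=0$, and since $e(L/K,v)=1$ and the residue extension carries all the information, one checks $u$ is genuinely a unit — this is where purity and $e=1$ are used, as in the proof of Lemma \ref{generationofideals}). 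Then for any other generator-shift $(\eta-c')/a'$ appearing in the generating set of Lemma \ref{generationofideals}, the hypothesis $v(\eta-c')\geq v(\eta-\overline c)$ means $\eta-c'=(\eta-\overline c)+(\overline c-c')$ with $v(\overline c-c')\geq v(\eta-\overline c)=va$; hence
\[
\frac{\eta-c'}{a}=\frac{\eta-\overline c}{a}+\frac{\overline c-c'}{a}\in \VR_K\!\left[\frac{\eta-\overline c}{a}\right],
\]
since $(\overline c-c')/a\in\VR_K$. For pairs $(c',a')$ with $v(\eta-c')<v(\eta-\overline c)$ — these can also occur in Lemma \ref{generationofideals}'s index set as long as $vd'\leq v(\eta-c')$ for the relevant $d'$ — one instead shows directly that $(\eta-c')/a'$ lies in $\VR_K[(\eta-\overline c)/a]$ by writing $\eta-c'=(\eta-\overline c)+(\overline c-c')$ and using that $v(\overline c - c')=v(\eta-c')$ (the minimum is achieved by the second term), so $(\eta-c')/a'$ is, up to a unit of $\VR_K$, equal to $(\overline c-c')/a'\in\VR_K$; thus it contributes nothing new. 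Assembling these observations, every generator in the family of Lemma \ref{generationofideals} lies in $\VR_K[(\eta-\overline c)/a]$, localized at the maximal ideal, which gives
\[
\VR_L=\VR_K\left[\frac{\eta-\overline c}{a}\right]_v.
\]

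The main obstacle I anticipate is the careful bookkeeping in handling \emph{all} pairs $(c',a')$ allowed by the index set of Lemma \ref{generationofideals}, not merely those with $v(\eta-c')$ large: one must verify that the defect-$1$ condition (via Proposition \ref{propdefectcasepure}) really does rule out any ``new'' element arising from shifts $c'$ far from $\overline c$, and that the passage to the localization $(\,\cdot\,)_v$ is what makes this work when $(\eta-\overline c)/a$ fails to be a unit for some choices. The cleanest route is probably to first reduce, using the triangle-inequality identity $\eta-c'=(\eta-\overline c)+(\overline c-c')$, to the case $v(\eta-c')\geq v(\eta-\overline c)$ with $\overline c$ chosen maximal in the sense of Proposition \ref{propdefectcasepure}, and then the defect-$1$ inequality does the rest; everything else is the same computation as in Lemma \ref{generationofideals}.
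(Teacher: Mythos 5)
There is a genuine gap, and it sits exactly where the real work of the proposition lies. In the plateau case, the generating family of Lemma \ref{generationofideals} consists of elements $\frac{\eta-c'}{a'}$ where \emph{both} $v(\eta-c')$ and $va'$ are unbounded: since $v(\eta-K)$ has no maximum, the proof of Lemma \ref{generationofideals} forces you to use centers $c'$ with $v(\eta-c')$ arbitrarily large (and denominators $a'$ with $va'$ up to $v(\eta-c')$). For such a pair your decomposition $\frac{\eta-c'}{a'}=\frac{\eta-\overline c}{a'}+\frac{\overline c-c'}{a'}$ splits an element of non-negative value into two summands each of value $v(\eta-\overline c)-va'$, which is \emph{negative} as soon as $va'>v(\eta-\overline c)=va$; neither summand lies in $\VR_K\left[\frac{\eta-\overline c}{a}\right]$, and the cancellation between them is precisely what must be explained. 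You only verified membership for the fixed denominator $a$, which does not cover the family. A telling symptom: after invoking Proposition \ref{propdefectcasepure} you never actually use the inequality $\beta_1+v(\eta-c')<\beta_\ell+\ell v(\eta-c')$ anywhere in the computation, so your argument would go through verbatim without the hypothesis $d(L/K,v)=1$ — but the statement is false in that generality (by Knaf's necessity direction, essential finite generation forces defect one), so the defect-one inequality must enter in a load-bearing way.

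The missing idea, which is the heart of the paper's proof, is to use the algebraic relation $g(\eta)=0$. Writing the $(x-c)$-expansion of $g$, evaluating at $\eta$ and dividing by $\partial g(c)$ gives $\eta-c=-\frac{b_0}{h}$ with $b_0=\frac{g(c)}{\partial g(c)}\in K$ and $h=1+b_2(\eta-c)+\ldots+b_n(\eta-c)^{n-1}$, where $b_\ell=\frac{\partial_\ell g(c)}{\partial g(c)}$. The defect-one inequality from Proposition \ref{propdefectcasepure} is what shows, via the binomial expansion of $(\eta-c)^{\ell-1}$ in powers of $\frac{\eta}{a}$, that all the relevant coefficients lie in $\MI_K$, hence $vh=0$ and $h\in\VR_K\left[\frac{\eta}{a}\right]$; after localizing at $v$, $h$ becomes invertible and one gets $\frac{\eta-c}{a'}=-\frac{b_0/a'}{h}\in\VR_K\left[\frac{\eta}{a}\right]_v$ for every $a'$ with $va'\leq v(\eta-c)$, since then $\frac{b_0}{a'}\in\VR_K$. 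This is the mechanism that reaches the ``deep'' generators with large $v(\eta-c')$ and large $va'$, and it is also why the conclusion only holds after localization. Your treatment of the easy cases (denominator $a$, or $v(\eta-c')<v(\eta-\overline c)$) is essentially fine, but without this step the proposition is not proved.
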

\begin{proof}
If $\nu(\Psi_1)$ has a maximum $\nu(x-\overline c)$, then by Lemma \ref{generationofideals} we have
\[
\VR_L=\VR_K\left[\frac{\eta-\overline c}{a}\right]
\]
for any $a\in K$ with $v(\eta-\overline{c})=va$. Hence the result follows.

Assume now that $\Psi_1$ is a plateau. By the defect formula there exists $\overline c\in K$ such that if $v(\eta-c)\geq v(\eta-\overline c)$, then $d_{x-c}g=1$. Replacing $\eta$ by $\eta-\overline c$, we can assume that $\overline c=0$.

Assume that for every $c\in K$ with $v(\eta-c)\geq v\eta$ we have
\[
\beta_\ell:=v\left(\partial_\ell g(0)\right)=v\left(\partial_\ell g(c)\right)\mbox{ for every }\ell, 1\leq \ell\leq n.
\]
Since
\begin{equation}\label{formulaforg}
g=g(c)+\partial g(c)(x-c)+\ldots+\partial_ng(c)(x-c)^n
\end{equation}
and $d(L/K,v)=1$ we deduce by Proposition \ref{propdefectcasepure} that
\begin{equation}\label{eqauatidiffe}
v(g(c))=\beta_1+v(\eta-c)<\beta_\ell+\ell v(\eta-c)\mbox{ for every }\ell, 1<\ell\leq n.
\end{equation}
Take $a\in K$ such that $v\eta=va$. We will show that for every $c\in K$ with $v(\eta-c)$ large enough, and $a'\in K$ with $va'\leq v(\eta-c)$ we have
\[
\frac{\eta-c}{a'}\in \VR_K\left[\frac{\eta}{a}\right ]_v.
\]
This together with Lemma \ref{generationofideals} will imply the result.

For each $\ell$, $0\leq \ell\leq n$, set
\[
b_\ell=\frac{\partial_\ell g(c)}{\partial g(c)}.
\]
From \eqref{formulaforg}, we deduce that that
\[
\eta-c=-\frac{b_0}{1+b_2(\eta-c)+\ldots+ b_n(\eta-c)^{n-1}}.
\]
Set
\[
h:=1+b_2(\eta-c)+\ldots+b_n(\eta-c)^{n-1}.
\]
It remains to show that
\begin{equation}\label{hisqhatenat}
vh=0\mbox{ and }h\in \VR_K\left[\frac \eta a\right].
\end{equation}
Indeed, if this is true, then $v(\eta-c)=vb_0$ and consequently
\[
\frac{\eta-c}{a'}=-\frac{b_0/a'}{1+b_2(\eta-c)+\ldots+ b_n(\eta-c)^{n-1}}\in \VR_K\left[\frac{\eta}{a}\right]_v.
\]

By the binomial expansion we have $h=1+\overline h$ where $\overline h$ is a sum of terms of the form
\[
{\ell-1 \choose j} b_\ell\eta^jc^{\ell-j-1}={\ell-1 \choose j} b_\ell a^jc^{\ell-j-1}\left(\frac{\eta}{a}\right)^j
\]
for some $\ell, 2\leq \ell\leq n$ and $j,0\leq j\leq \ell-1$. Since $v\eta=vc=va$ and $v(b_\ell)=\beta_\ell-\beta_1$, by \eqref{eqauatidiffe}, we have
\[
v\left({\ell-1 \choose j} b_\ell a^jc^{\ell-j-1}\right)\geq vb_\ell+(\ell-1)v\eta=\beta_\ell-\beta_1+(\ell-1)v\eta>0.
\]
Hence, $v\overline h>0$ and we deduce \eqref{hisqhatenat}.
\end{proof}

We will now prove Proposition \ref{Propimportan} which is a generalization the previous result to the case $\epsilon(L/K,v)=e(L/K,v)$.  

\begin{proof}[Proof of Proposition \ref{Propimportan}]
Consider the notation as in the proof of Proposition \ref{pureeigualepsil}. As in Proposition \ref{pureeigualepsil}, if $\nu(\Psi_1)$ has a maximum, then the result follows immediately from Theorem \ref{mainthem}. Hence, assume that $\Psi$ is a plateau.
 
For each $\ell$, $1\leq \ell<e$, take $a_\ell\in K$ such that
\[
\frac{\eta^\ell}{a_\ell}\in \{\gamma_1,\ldots, \gamma_{e-1}\}.
\]
Also, choose $a_e\in K$ such that $va_e=ev\eta$. For each $j$, $1\leq j\leq n$, write
\begin{equation}
\eta^j=a_ra_e^s\frac{\eta^r}{a_r}\left(\frac{\eta^e}{a_e}\right)^s\mbox{ where }j=r+se, 0\leq r<e.
\end{equation}

From \eqref{formulaforg}, we deduce that
\begin{equation}\label{equaimtenepsilowe}
\eta-c=-\frac{b_0}{1+b_2(\eta-c)+\ldots+ b_n(\eta-c)^{n-1}}.
\end{equation}
Set
\[
h:=1+b_2(\eta-c)+\ldots+b_n(\eta-c)^{n-1}.
\]
We claim that
\begin{equation}\label{hisqhatenat1}
vh=0\mbox{ and }h\in \VR_K\left[\frac{\eta^\ell}{a_\ell}\left.  \right| 1\leq \ell\leq e\right ].
\end{equation}
Indeed, by the binomial expansion we have $h=1+\overline h$ where $\overline h$ is a sum of terms of the form
\[
{\ell-1 \choose j} b_\ell\eta^jc^{\ell-j-1}={\ell-1 \choose j} b_\ell c^{\ell-j-1}a_ra_e^s\frac{\eta^r}{a_r}\left(\frac{\eta^e}{a_e}\right)^s
\]
for some $\ell, 2\leq \ell\leq n$ and $j,0\leq j\leq \ell-1$. Since $v\eta=vc$ and $v(b_\ell)=\beta_\ell-\beta_1$, by \eqref{eqauatidiffe}, we have
\[
v\left({\ell-1 \choose j} b_\ell c^{\ell-j-1}a_ra_e^s\right)\geq vb_\ell+(\ell-1)v\eta+v(a_r)-rv\eta>-\gamma_r.
\]
In particular,
\[
{\ell-1 \choose j} b_\ell c^{\ell-j-1}a_ra_e^s\in \VR_K
\]
and consequently $v\overline h>0$. Therefore, we deduce \eqref{hisqhatenat1}.

Take a sequence of key polynomials $\textbf{Q}=\{x-c_i\}_{i\in I}$ for $\nu$. For every $\lambda\in  \N_0^I$, by \eqref{equaimtenepsilowe} and \eqref{hisqhatenat1} there exist
\[
b_\lambda\in K\mbox{ and }H_\lambda\in \VR_K\left[\frac{\eta^\ell}{a_\ell}\left.  \right| 1\leq \ell\leq e\right ]
\]
such that
\[
vH_\lambda=0\mbox{ and }\textbf{Q}(\eta)^\lambda=\frac{b_\lambda}{H_\lambda}.
\]
In particular, for every $a_\lambda$ such that
\[
v\left(\frac{\textbf{Q}(\eta)^\lambda}{a_\lambda}\right)\in\{\gamma_1,\ldots,\gamma_e\}
\]
we deduce that
\[
\frac{\textbf{Q}(\eta)^\lambda}{a_\lambda}=\frac{b_\lambda/a_\lambda}{H_\lambda}\in \VR_K\left[\frac{\eta^\ell}{a_\ell}\left.  \right| 1\leq\ell\leq e\right ]_v.
\]
This, together with Theorem \ref{mainthem} implies the result.
\end{proof}

\end{document}